\newtheorem{theorem}{Theorem}
\newtheorem{lemma}[theorem]{Lemma}
\newtheorem{definition}[theorem]{Definition}
\newtheorem{proposition}[theorem]{Proposition}
\numberwithin{theorem}{section}
\numberwithin{equation}{section}
\def\N {{\mathbb N}}
\def\Z {{\mathbb Z}}
\def\R {{\mathbb R}}
\def\D {{\mathcal D}}
\def\<{\left\langle}
\def\>{\right\rangle}
\providecommand{\no}[1]{  \lVert  #1  \rVert }
\providecommand{\nos}[1]{  \lVert  #1  \rVert }
\title[Phase space localizing operators]{Phase space localizing operators}
\author{Marco Fraccaroli}
\author{Olli Saari}
\author{Christoph Thiele}
\address{BCAM - Basque Center for Applied Mathematics, 
    Mazarredo, 14 E48009 Bilbao, 
    Basque Country, Spain.}
\email{mfraccaroli@bcamath.org}
\address{Departament de Matem\`atiques, 
	Universitat Polit\`ecnica de Catalunya,
	Avinguda Diagonal 647, 08028 Barcelona,
	Catalunya, Spain}
\address{Centre de Recerca Matem\`atica, Edifici C, Campus Bellaterra, 08193 Bellaterra, Catalunya, Spain}
\email{olli.saari@upc.edu}
\address{Mathematical Institute, 
	University of Bonn,
	Endenicher Allee 60, 53115 Bonn,
	Germany}
\email{thiele@math.uni-bonn.de}
\subjclass[2020]{42B15, 42C15}
\keywords{Phase space localization, time-frequency analysis, modulation invariant operators, uniform estimates}
\begin{document}
	
\date{\today}

\begin{abstract}
We construct phase space localizing operators in all dimensions.
These are frequency localized variants of the conditional expectation operator related to a dyadic stopping time.
Our construction is an improvement over the so-called
phase plane projections of Muscalu, Tao and the third author in one dimension.
The motivation for such operators comes from time-frequency analysis. 
They are used in particular to prove uniform estimates for multilinear modulation invariant operators.
\end{abstract}

\maketitle

\section{Introduction}

Given a dyadic cube $U$ and a finite partition $\mathcal{P}$ of $U$ into smaller dyadic cubes, 
the corresponding conditional expectation operator is the orthogonal projection in $L^2(U)$ onto the subspace of functions measurable 
with respect to the sigma algebra $\Sigma_{\mathcal{P}}$ generated by $\mathcal{P}$. 
The conditional expectation $g$ of a function $f\in L^2(U)$ can be written as
\begin{align}\label{e:condexp}
g &=\sum_{I\in \mathcal{P}}A_If, 
\\
\label{e:cubeave}
A_If(x) &=1_I(x) \frac 1{|I|}\int_I f(y)\, dy.
\end{align}
The conditional expectation $g$ satisfies
\begin{equation}\label{e:projin}
A_I(f-g)=0\end{equation}
whenever a dyadic cube $I$ is in $\Sigma_{\mathcal{P}}$ and
\begin{equation}\label{e:projout}
1_Ig -A_Ig=0
\end{equation}
whenever $I$ is not in $\Sigma_P$.
If we define
\begin{equation}\label{e:dyadicS}
S:=\sup_{I\in \Sigma_P} \|A_If\|_{\infty},\end{equation} then the conditional expectation $g$ satisfies
\begin{equation}\label{e:projinf}
\|g\|_\infty \le  S.
\end{equation}

In the context of Fourier analysis, 
the sharp cutoffs in the definition of $A_I$ destroy frequency 
information and are undesirable. 
Instead of $A_I$ as above, one works with
phase space localized operators
of the type
\begin{equation}\label{e:convtq1}
A_If = \rho_I^{-\alpha} \times  (\phi_{|I|}*f).
\end{equation}
Here $\rho_I$ is a weight growing with relative distance to $I$, $\alpha>0$ is sufficiently large, and $\phi_{|I|}$ is a convolution kernel whose Fourier transform is supported in the ball of radius $|I|^{-1}$ about the origin. 
There is longstanding interest in constructing operators with similar properties as the conditional expectation 
but based on the smoother operators \eqref{e:convtq1}.
The obvious attempt to define a map from $f$ to $g$ as in \eqref{e:condexp} 
but with the operators $A_I$ as in \eqref{e:convtq1}, an operator that is sometimes used in similar context, for example in slightly modified form in \cite{dipliniofragkos},  fails for our purpose.
Such a map does not satisfy strong enough replacements for 
\eqref{e:projin}, \eqref{e:projout},
\eqref{e:dyadicS} and \eqref{e:projinf}.

The purpose of the present paper is to construct in Theorem \ref{thm:main_unippp} a function $g$
based on a partition $\mathcal{P}$ and a function $f$
so that suitable modifications of \eqref{e:projin}, \eqref{e:projout},
\eqref{e:dyadicS} and \eqref{e:projinf} hold, namely, in this order, 
\eqref{e:unif-gtheo}, \eqref{e:unigtheo-variant},
\eqref{e:defs} and \eqref{e:unifgnorm}. 
In place of unrealistically strong vanishing properties as in \eqref{e:projin}, \eqref{e:projout}, one has very strong estimates \eqref{e:unif-gtheo} and \eqref{e:unigtheo-variant} in terms of estimates at least as strong as Carleson measure estimates.
Such Carleson measure estimates are also called
sparseness or outer $L^\infty(\ell^1)$ bounds (\cite{MR3312633}, \cite{MR4292789})
in different parts of the literature. 
Note that we allow for a ratio $2^m$ between $|I|$ 
and the scale of $\phi$ in \eqref{e:convtq1}.
Uniformity of our estimates in the parameter $m$ is important for 
applications to uniform estimates for multilinear forms.
A precedent and motivating example for our phase space localizing operators
appears as so-called phase plane projections in \cite{MR1979774} in dimension $d=1$. 
We generalize the result to higher dimensions, 
simplify the construction, and strengthen the statement.

To state the main result in detail,
we fix a dimension $d\ge 1$. 
For a finite axis-parallel cube $I\subset \R^d$ and $r\ge 1$, 
let $r I$ denote the axis-parallel cube with the same center but $r$ times the side length.
Define for a point $y\in \R^d$ and a Borel set $F\subset \R$ the mollified distance
\[ \rho_I(y)=\inf\{r>1: y\in (2r-1)I\}, \quad \rho_I(F)= \inf_{ y\in F}\rho_I(y).\]
For an integer $j$ and a real number $\alpha>d$, let $\Phi_j^\alpha$ be the set of  continuous functions $\phi$ on $\R^d$ with
 \[|\phi|\le 2^{-d j}\rho_{[0,2^j]^d}^{-\alpha}\] and $\widehat{\phi}(\xi)=0$ if $|\xi|\ge 2^{-j}$,
where the Fourier transform is defined as
\[\widehat{\phi}(\xi)=\int_{\R^d} \psi(x)e^{-2\pi i x\cdot \xi}\, dx.\]
Let $\Psi_j^\alpha$ be the set of functions $\psi\in \Phi_j^\alpha$ with $\widehat{\psi}(\xi)=0$ for $|\xi|\le 2^{-j-2}$.

For $j\in \Z$ and $k\in \Z^d$, 
let $Q_{j,k}$ be the dyadic cube consisting of all points $y\in \R^d$ 
with $y_n\in [2^jk_n, 2^j (k_{n}+1))$ for $1\le n\le d$, where $y_n$ and $k_n$ are the components of
$y$ and $k$ respectively.
Let $\D_j$ be the set of cubes $Q_{j,k}$ with $k\in \Z^d$.
Let $\D=\bigcup_{j\in \Z} \D_j$ be the set of all dyadic cubes.
For a finite collection $M$ of pairwise disjoint cubes contained in a cube $U$, 
let $M_U$ be the set of cubes $J\in \D$ 
such that there exists $I \in M$ with $I\subset J\subset U$.

\begin{theorem}[Main theorem]\label{thm:main_unippp} 
Let $1 \le p \le \infty$ and  $1/p+1/p'=1$.
Let $\alpha>d$ be real.
There exists $C=C_{p,d,\alpha}>0$ such that for all $m \in \N$ the following holds.

Let $i_0\in \Z$ and $U
\in \D_{i_0}$. Let $M$ be a finite non-empty collection of pairwise disjoint cubes contained in $U$.
There exists a linear mapping sending each locally bounded measurable function 
$f$ on $\R^d$ with $\no{f\rho_{U}^{-\alpha}}_\infty<\infty$
to a Borel function $g$ supported in $5U$ with the following properties.
Denote 
\begin{equation}\label{e:defs}
S \coloneq \sup_{i\in \Z}\ \sup_{I\in \D_i\cap  {M}_U}\  \sup_{\phi \in \Phi_{i-m-2}^{4\alpha}} 2^{-id/p}\no{\rho_I^{-\alpha} \phi *f}_p.
\end{equation}
Then
\begin{equation}
\label{e:unifgnorm}
\no{g}_p \le C S 2^{i_0d/p},
\end{equation} 
and for every $j\le i_0$ and every $J\in \D_j$ it holds
\begin{equation}   
\label{e:unif-gtheo}
  \sum_{i \le i_0} \sum_{\substack{ I\in \D_i\cap {M}_U\\  I\subset J}}\  \sup_{\phi\in \Phi_{i-m}^{4\alpha}}
2^{id/p'}\no{\rho_I^{-3\alpha} \phi *(f-g)}_p \le CS2^{jd},  
\end{equation}
and if no $J'\in \D_j$ 
with $\rho_J(J')\le 1$ contains an element of 
$M_U$, it holds
\begin{equation}
\label{e:unigtheo-variant}
\sum_{i \le i_0} \sup_{\substack{I \in \D_i \setminus {M}_U  \\  I\subset J}}\ 
\sup_{\psi\in \Psi_{i-m}^{4\alpha}}
2^{-id/p}\no{\rho_I^{-3\alpha} \psi *g }_p  \le CS\no{1_U \rho_J^{-\alpha}}_\infty.
\end{equation}
\end{theorem}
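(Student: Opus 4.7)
The plan is to construct $g$ explicitly as a smoothed, stopping-time-type conditional expectation of $f$ adapted to the partition $M$, and then to verify the three bounds separately. By the translation-dilation invariance of the statement, reduce to $i_0 = 0$ and $U = [0,1]^d$. Fix a radial $\phi \in \Phi^{4\alpha}_{-1}$ with $\int \phi = 1$, and for each $I \in M$ let $\phi_I$ denote the scale-translate of $\phi$ to spatial scale $2^{i_I}$ centered at $I$, so that $\widehat{\phi_I}$ is supported in $|\xi| \le c\, 2^{-i_I}$ and $\phi_I * f$ captures the coarse-scale behavior of $f$ near $I$. Choose a smooth resolution of unity $\{\eta_I\}_{I \in M}$ with each $\eta_I$ supported in $3I$, smooth at scale $2^{i_I}$, and $\sum_{I \in M} \eta_I = 1_U$ pointwise. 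Define
\[
g := \sum_{I \in M} \eta_I \cdot (\phi_I * f),
\]
which is automatically supported in $\bigcup_{I \in M} 3I \subset 5U$. This is a direct higher-dimensional analog of the phase plane projection from \cite{MR1979774}, with the factor $2^m$ between the scale of $I$ and the scale of the kernels appearing in $S$ providing the essential technical margin.

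For \eqref{e:unifgnorm}, for each $I \in M$ the bound $\no{\rho_I^{-\alpha}\phi_I * f}_p \le CS\, 2^{i_I d/p}$ follows from the definition of $S$ applied to a reproducing kernel $\phi' \in \Phi^{4\alpha}_{i_I - m - 2}$ with $\phi' * \phi_I = \phi_I$; the factor $2^{m+2}$ of fine-scale margin in $S$ guarantees such a $\phi'$ exists, and the weight transfer uses that $\rho_I^{-\alpha}$ is essentially constant on the scale-$2^{i_I}$ support of $\phi_I$. Since the weighted cutoffs $\eta_I \rho_I^\alpha$ are essentially disjointly supported, summing gives $\no{g}_p^p \le CS^p \sum_{I \in M} |I| \le CS^p |U| = CS^p 2^{i_0 d}$. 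For \eqref{e:unif-gtheo}, use $\sum_I \eta_I = 1$ on $U$ to write $f - g = \sum_{I \in M}\eta_I \cdot ((\delta - \phi_I) * f)$ on $U$, with the out-of-$U$ contribution absorbed by $\supp g \subset 5U$ and the growth hypothesis on $f\rho_U^{-\alpha}$. Since $\widehat{\delta - \phi_I}$ vanishes on $|\xi| \le c\, 2^{-i_I}$, the piece $(\delta - \phi_I)*f$ isolates the content of $f$ above that frequency, and convolution with $\phi \in \Phi^{4\alpha}_{i-m}$ (Fourier support $|\xi| \le 2^{m-i}$) picks out only the Littlewood-Paley band $2^{-i_I} \le |\xi| \le 2^{m-i}$, each piece controlled by $S$. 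Summing the $2^{id/p'}$-weighted bounds over $I' \in \D_i \cap M_U$ with $I' \subset J$ via a standard Carleson-packing argument (exploiting $\sum_{I \in M,\, I \subset J} |I| \le 2^{jd}$ and a geometric decay in $i-i_I$ produced by the frequency separation) yields the target $CS\, 2^{jd}$.

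The estimate \eqref{e:unigtheo-variant} is the main technical obstacle. For $I \in \D_i \setminus M_U$ with $I \subset J$ and all dyadic neighbors of $J$ free of $M_U$, every $I'' \in M$ contributing to $\psi * g$ near $I$ satisfies either $i_{I''} > j \ge i$ (stopping scale strictly coarser than $J$) or is spatially separated from $J$ by at least $2^j$. In the first case, $\phi_{I''} * f$ is strictly bandlimited to $|\xi| \le c\, 2^{-i_{I''}}$; although the product $\eta_{I''} \cdot (\phi_{I''} * f)$ is only essentially bandlimited, the Schwartz-type decay of $\widehat{\eta_{I''}}$ beyond $|\xi| \sim 2^{-i_{I''}}$, together with convolution against $\psi \in \Psi^{4\alpha}_{i-m}$ (Fourier support in the annulus $|\xi| \sim 2^{m-i}$, well separated from $2^{-i_{I''}}$ thanks to $i_{I''} > i$ and $m \ge 0$), produces only rapidly decaying tails, quantified by repeated integration by parts using the vanishing moments of $\psi$ and the smoothness of $\eta_{I''}$ at scale $2^{i_{I''}}$. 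In the second case, the weight $\rho_{I''}^{-\alpha}$ produces direct spatial decay, which combines with the $L^p$ bound on $\phi_{I''}*f$ established above. The crux is that multiplication by $\eta_{I''}$ destroys strict bandlimitedness, so orthogonality is only approximate and must be controlled quantitatively; the argument hinges on balancing the frequency margin provided by the factor $2^m$ against the spatial budget provided by the upgrade from $\rho_I^{-\alpha}$ in the definition of $S$ to $\rho_I^{-3\alpha}$ in the conclusion, which together deliver the summability over $I'' \in M$ and over scales $i \le i_0$ with a constant uniform in $m$.
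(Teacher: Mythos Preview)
Your construction has a fundamental gap: the partition of unity $\{\eta_I\}_{I\in M}$ with the stated properties does not exist when $M$ contains adjacent cubes of disparate scales. If $I_1,I_2\in M$ are adjacent with $|I_1|\ll |I_2|$, then in order that $\eta_{I_1}+\eta_{I_2}=1$ near their common face, $\eta_{I_2}$ must transition at the fine scale $2^{i_{I_1}}$, contradicting ``smooth at scale $2^{i_{I_2}}$''. Any Whitney-type repair, letting $\eta_{I_2}$ be smooth at the adaptive fine scale near small neighbours, destroys precisely the frequency localization you invoke for \eqref{e:unigtheo-variant}: $\widehat{\eta_{I_2}}$ then carries mass at frequencies $\sim 2^{-i_{I_1}}$, not only $\sim 2^{-i_{I_2}}$, so pairing $\eta_{I_2}\cdot(\phi_{I_2}*f)$ against $\psi\in\Psi_{i-m}^{4\alpha}$ no longer yields a rapidly decaying tail. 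This is exactly the failure mode the introduction flags when it says the naive smooth analogue of \eqref{e:condexp} ``fails for our purpose''.

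The paper's construction is substantially more delicate and is not a spatial partition of unity at all. It telescopes in frequency, building $g$ from pieces $(\psi_{j-m}*f)1_{E_j^1}$ over all scales $j\le 0$, and then repairs the sharp cutoffs $1_{E_j^1}$ by correction terms $\partial_n^{d+1}\sigma_{n,j}$, where $\sigma_{n,j}$ is a smooth cutoff times a $(d{+}1)$-fold primitive $\theta_{n,j-m}*f$ of the Littlewood--Paley piece (using a cone decomposition so that such primitives exist). Because $\sigma_{n,j}$ carries the small factor $2^{(j-m)(d+1)}$, the terms where derivatives hit the cutoff are harmless, while the sum $G_{n,j}=(\theta_{n,j-m}*f)1_{E_j^1}+\sigma_{n,j}$ is genuinely $C^\infty$ (Lemma~\ref{l:contderivative}); this is what makes the integration-by-parts proof of \eqref{e:unigtheo-variant} in Lemma~\ref{l:inK} work. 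That moment-vanishing and derivative-control mechanism, which the paper notes is essential for $d>1$, is exactly what the simple product $\eta_I\cdot(\phi_I*f)$ cannot supply.
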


To emphasize dependence on parameters, 
denote the quantity in \eqref{e:defs} by $S_{\alpha,p}$.
One obtains estimates similar to \eqref{e:unifgnorm}, \eqref{e:unif-gtheo} and \eqref{e:unigtheo-variant}
with $S_{\alpha,p}$ replaced by $S_{\beta,q}$ with different parameters by using the following easy proposition.
\begin{proposition}\label{t:spq}
Let $1\le p,q\le \infty$ and $\alpha>d$. 
There exists $C>0$ such that the following holds. 
Let $i_0$, $U$, $M$ $f$,  $m$ as in Theorem \ref{thm:main_unippp}.
If $p\le q$, then
\begin{equation}\label{e:holder}
S_{\alpha(1+\frac {q-p}{qp}),p}\le CS_{\alpha,q}.
 \end{equation}
If $q\le p$, then
\begin{equation}\label{e:logconvex}
   S_{\alpha,p}\le C S_{\alpha,q}^{q/p}
\|f \rho_{U}^{-\alpha}\|_\infty^{1-q/p}
 \end{equation}
and
\begin{equation}\label{e:bernstein}S_{\alpha,\infty}\le C 2^{dm} S_{\alpha,1}.\end{equation}
\end{proposition}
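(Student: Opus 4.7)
The three estimates each stem from a distinct classical principle applied to the quantities defining $S_{\alpha,p}$: H\"older's inequality for \eqref{e:holder}, log-convexity of $L^p$ norms for \eqref{e:logconvex}, and a weighted Bernstein inequality for \eqref{e:bernstein}. In every case my plan is to fix a triple $(i,I,\phi)$ with $I\in\D_i\cap M_U$ and $\phi\in\Phi_{i-m-2}^{4\alpha}$, prove a pointwise-in-parameters estimate, and take the supremum at the end. The bulk of the work lies in \eqref{e:bernstein}, while \eqref{e:holder} and \eqref{e:logconvex} are brief applications of textbook tools combined with basic decay properties of the weights.

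For \eqref{e:holder}, I would set $1/r = 1/p - 1/q \ge 0$ and factor $\rho_I^{-\alpha(1+(q-p)/(pq))} = \rho_I^{-\alpha}\cdot\rho_I^{-\alpha/r}$. H\"older's inequality with exponents $q$ and $r$, together with $\int \rho_I^{-\alpha}\lesssim 2^{id}$ (which holds because $\alpha > d$), converts the $2^{-id/q}$-normalization defining $S_{\alpha,q}$ into the $2^{-id/p}$-normalization defining $S_{\alpha(1+\cdots),p}$, since $1/q + 1/r = 1/p$. The class inclusion $\Phi_{i-m-2}^{4\alpha(1+\cdots)}\subset\Phi_{i-m-2}^{4\alpha}$ ensures that admissible test functions on the left remain admissible on the right. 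For \eqref{e:logconvex}, I would apply $\no{h}_p \le \no{h}_q^{q/p}\no{h}_\infty^{1-q/p}$ to $h = \rho_I^{-\alpha}\phi * f$, reducing matters to the uniform bound $\no{\rho_I^{-\alpha}\phi*f}_\infty \lesssim \no{f\rho_U^{-\alpha}}_\infty$. For this, I would combine $|f(y)|\le\no{f\rho_U^{-\alpha}}_\infty\rho_U(y)^\alpha$ with the Lipschitz estimate $\rho_U(y) \le \rho_U(x) + C2^{-i_0}|x-y|$ and the moment bound $\int|\phi(y)||y|^\alpha\,dy \lesssim 2^{(i-m-2)\alpha}$; since $i\le i_0$, the error term is harmless and one obtains $|\phi*f(x)| \lesssim \no{f\rho_U^{-\alpha}}_\infty \rho_U(x)^\alpha$. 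A direct comparison of the defining inequalities of $\rho_I$ and $\rho_U$ for $I\subset U$ yields $\rho_U(x) \le 2\rho_I(x)$, absorbing $\rho_U^\alpha$ into $\rho_I^\alpha$ and finishing the argument.

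The delicate step is \eqref{e:bernstein}, a weighted $L^\infty$-$L^1$ Bernstein inequality. The plan is to exploit that $\widehat{\phi*f}$ is supported in the ball of radius $R := 2^{-(i-m-2)}$: fix a Schwartz reproducing kernel $\tilde\phi$ at scale $R^{-1}$ with $\widehat{\tilde\phi}\equiv 1$ on that ball, so that $\phi*f = \tilde\phi*(\phi*f)$ and
\[
|\phi*f(x)| \le C R^d \int |\phi*f(y)|(1+R|x-y|)^{-N}\,dy
\]
for any $N$. To multiply both sides by $\rho_I^{-\alpha}(x)$ and pull a $\rho_I^{-\alpha}(y)$ inside the integral, I would use the Lipschitz bound $\rho_I(y)/\rho_I(x) \le 1 + C2^{-i}|x-y| = 1 + C2^{-(m+2)}R|x-y| \le 1 + R|x-y|$ (up to absolute constants); its $\alpha$th power is absorbed into the Schwartz tail $(1+R|x-y|)^{-N}$ by taking $N \ge \alpha$. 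What remains is $\rho_I^{-\alpha}(x)|\phi*f(x)|\le CR^d\no{\rho_I^{-\alpha}\phi*f}_1$, and substituting $R^d = 2^{(m-i+2)d}$ and taking suprema yields $S_{\alpha,\infty}\le C 2^{dm}S_{\alpha,1}$. The main obstacle, and what makes the argument work, is precisely the factor $2^{-(m+2)}$ separating the kernel scale $R^{-1} = 2^{i-m-2}$ from the weight scale $2^i$: this gap is what allows the Lipschitz distortion of $\rho_I$ across the reproducing kernel to be dominated by the kernel's Schwartz tail.
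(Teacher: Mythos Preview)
Your proof is correct. For \eqref{e:holder} and \eqref{e:logconvex} your arguments are essentially those of the paper: H\"older with exponents $q$ and $r=qp/(q-p)$, respectively log-convexity followed by the bound $\|\rho_I^{-\alpha}(\phi*f)\|_\infty\le C\|f\rho_U^{-\alpha}\|_\infty$. The paper phrases the latter more compactly as $\|\rho_I^{-\alpha}(\phi*\rho_U^\alpha)\|_\infty\le C\|\rho_I^{-\alpha}\rho_U^\alpha\|_\infty\le C$, commuting the weight through the convolution in one step, while you unpack this via the Lipschitz bound on $\rho_U$ and a moment estimate on $\phi$; the content is the same. You also make explicit the class inclusion $\Phi_{i-m-2}^{4\alpha'}\subset\Phi_{i-m-2}^{4\alpha}$ for $\alpha'\ge\alpha$, which the paper leaves implicit.

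For \eqref{e:bernstein} you take a genuinely different route. The paper localizes in space: it partitions $\R^d$ into cubes $J\in\D_i$, multiplies $\phi*f$ by Schwartz bumps $\varphi_J$ whose Fourier transforms are supported in the unit ball, and applies the classical (unweighted) Bernstein inequality to each $\varphi_J\,\phi*f$, whose Fourier support now lies in a ball of radius $2^{3+m-i}$; summing the pieces against $\rho_I(J)^{-\alpha}$ recovers the weighted estimate. You instead use a single global reproducing kernel $\tilde\phi$ at the frequency scale $R=2^{-(i-m-2)}$ and move the weight $\rho_I^{-\alpha}$ through the convolution via the Lipschitz bound $\rho_I(y)\le\rho_I(x)+2^{-i}|x-y|$, absorbing the distortion factor $(1+2^{-m-2}R|x-y|)^\alpha$ into the Schwartz tail of $\tilde\phi$. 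Your argument is slightly more direct and makes transparent the role of the scale separation $2^{-m-2}$ between weight and kernel; the paper's decomposition, on the other hand, reduces everything to the standard unweighted Bernstein inequality with no need to track the weight through a convolution. Both are valid and of comparable length.
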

The estimate \eqref{e:logconvex}
loses control in the size by virtue of the power $q/p$ and is thus more useful for $q/p$ close to one.
The estimate \eqref{e:bernstein}
loses uniformity in the parameter $m$ and is more useful for small values of $m$.
Using Proposition \ref{t:spq} together with Theorem \ref{thm:main_unippp},
we recover all the estimates that one might want to infer from Proposition 7.4 of \cite{MR1979774}.
We emphasize that the square sum term in the definition of the size in that paper
is not needed to estimate the phase space localization errors in Theorem \ref{thm:main_unippp}.
 
Concerning the technical aspects of the present paper,
our construction starts with a telescoping sum involving characteristic functions of cubes as in \cite{MR1979774}.
We then correct the jump discontinuities appearing in the construction 
with a family of coronas based on smooth restrictions of certain primitives of Littlewood--Paley pieces. 
The correction part constructed here is different from that in \cite{MR1979774} 
and more amenable to our setup.
It provides vanishing of higher moments, which is needed when $d > 1$. It also provides
control over derivatives of the phase space projection, which is needed to prove \eqref{e:unigtheo-variant} when $d > 1$.
As a consequence,
we can treat general trees with a unified construction without 
distinguishing lacunary and non-lacunary trees
 as in \cite{MR1979774}.
In dimension $d=1$, 
our construction provides a simple and streamlined substitute to \cite{MR1979774}.

Concerning applications of our main theorem, we recall that the precedent for our main theorem, the one-dimensional phase plane projections appearing in \cite{MR1979774},
was used to prove uniform estimates for multilinear singular integrals with modulation invariance. In such modulation invariant setting, one typically conjugates a Theorem like
\ref{thm:main_unippp} with modulations.
In more detail, one applies the theorem to the functions $M_{-\eta} f$,
where the modulation $M_\eta$ is defined by
\[M_\eta f(x):=f(x) e^{2\pi i x\cdot \eta} .\]
Let $g_1$ and
$g_2$ be obtained by applying the theorem
to $M_{-\eta_1}f$ and $M_{-\eta_2}f$ respectively with possibly different collections of cubes .
If $\eta_1$ and $\eta_2$ are sufficiently 
far from each other,
measured in terms of the geometry of the cubes,
then
$M_{\eta_1}g_1$ and 
$M_{\eta_2}g_2$
are almost orthogonal
to each other thanks to the good frequency
localization in Theorem \ref{thm:main_unippp}. 
This is crucial in the
application to modulation invariant operators.
In the companion paper \cite{fst-uniform}, we use Theorem \ref{thm:main_unippp} to obtain uniform bounds for multilinear singular integrals with modulation symmetries in the plane and space.

For further background on related uniform bounds in $d=1$, 
we refer to \cite{MR1933076}, \cite{MR2113017}, \cite{MR2320411}, \cite{MR3538147} and \cite{gennady_uniform} for uniform bounds for bilinear Hilbert transforms in various regions of exponents and 
to \cite{MR2701349}, \cite{MR2197068}, \cite{MR3000982}, \cite{MR2413217} and \cite{saarithiele} on bilinear multipliers closely related to uniform bounds for the bilinear Hilbert transform.
We also point out that the classical dyadic conditional expectation is used similarly together with
Walsh--Fourier modulations in the context of Walsh models for multilinear singular integrals; 
see for example \cite{MR2997005} and \cite{MR3482272}.
Finally, 
we mention that phase plane projections in time-frequency analysis are not only 
utilized for establishing uniform bounds; see \cite{MR4403962} for an application to vector valued estimates.

\bigskip

\noindent \textit{Acknowledgement.} 
The authors were funded by the Deutsche Forschungsgemeinschaft (DFG, German Research Foundation) 
under the project numbers 390685813 (EXC 2047: Hausdorff Center for Mathematics) and 211504053 (CRC 1060: Mathematics of Emergent Effects).
The first author was supported by the Basque Government through the BERC 2022-2025 program and by the Ministry of Science and Innovation: BCAM Severo Ochoa accreditation CEX2021-001142-S / MICIN / AEI / 10.13039/501100011033.
The second author was supported by Generalitat de Catalunya (2021 SGR 00087), 
Ministerio de Ciencia e Innovaci\'on and the European Union -- Next Generation EU (RYC2021-032950-I),  (PID2021-123903NB-I00), the Spanish State Research Agency 
through the Severo Ochoa and María de Maeztu Program for Centers and Units of Excellence in 
R\&D (CEX2020-001084-M).

\section{Construction of the phase space projection}

Let $\alpha>d$.
The letter $C$ will denote a sufficiently large positive number 
that may be implicitly re-adjusted from inequality to inequality 
and that may depend on $\alpha$ and $d$. 
We denote the Lebesgue measure of a measurable set $E$ by $|E|$.
For a multi-index $l \in \N^{d}$,
we denote 
$$|l| = \sum_{n=1}^d l_n, \quad l! = \prod_{n=1}^d l_n! , $$
and for $k,l \in \N^{d}$, we write $k \le l$ if $k_n \le l_n$ for all $1\le n \le d$.

Let $m\in \N$.
By dilation and translation that observe the dyadic grid, it suffices to prove
Theorem \ref{thm:main_unippp} for $i_0=0$ and $U$
the unit cube $[0,1)^d$, a normalization that we henceforth assume. 
Let $M$ and $f$ be given as in the theorem.
Define for $j\in \Z$
\[T=M_U, \quad T_j^0\coloneq  T\cap \D_j, \quad  E_j^0\coloneq 
{\bigcup T_j^0}.\]
Define further for an integer $k \ge 1$
\[
T_j^k\coloneq\{I\in \D_{j}: \rho_I(E_j^0) \le k\}, \quad
E_j^{k}\coloneq{\bigcup T_j^k}, 
\]
and for an integer $k \ge 0$
\[
  B_j^k\coloneq  T_j^{k+1}\setminus T_j^k .
\]
Define $F_j^k$ to be $\R^d\setminus E_j^k$.

Let
$\tau$ be a Schwartz function on $\R^d$ such that 
$\widehat{\tau}(\xi)=0$
if $|\xi|\ge 2$ and $\widehat{\tau}(\xi)=1$
if $|\xi|\le 1$. Consider the  approximation of unity formed by the family of convolution kernels
\[\tau_j(x)=2^{-jd}\tau(2^{-j}x).\]
For the purpose of telescoping, define the difference kernels
\[\psi_j=\tau_{j-1}-\tau_{j}.\]
We observe $\widehat{\psi}_j(\xi)=0$ if $|\xi|\ge 2^{2-j}$ or $|\xi|\le 2^{-j}$. 

\subsection{Cone decomposition}
\label{sec:cone-decomposition}
We decompose $\psi_j$ using the open cones
\[O_n=\{\xi\in \R^d:2d|\xi_n|^2>|\xi|^2\}\]
for $1\le n\le d$. 
Any point $\xi\in \R^d$ not covered by any of the cones satisfies
\[d|\xi|^2=\sum_{n=1}^d |\xi|^2\ge \sum_{n=1}^d 2d|\xi_n|^2= 2d|\xi|^2,\]
and thus $\xi=0$. It follows that there is for each $n$ a smooth
function $\widehat{\chi}_n$ with $\widehat{\chi}_n(\xi)=0$ if $\xi\notin O_n$
such that 
\[\sum_{n=1}^d \widehat{\chi}_n(\xi)=1\]
if $1\le |\xi|\le 4$.
Define
\[\chi_{n,j}(x)=2^{-dj}\chi_n(2^{-j}x), \quad \psi_{n,j}=\psi_j*\chi_{n,j}.\]
In particular, we observe
\[\psi_j=\sum_{n=1}^d \psi_{n,j}.\]

\subsection{Construction}
\label{sec:construction}

The function $\widehat{\chi}_n$
vanishes on the set $2d|\xi_n|^2\le 1$. 
Hence we find Schwartz functions $\theta_{n,j}$ that satisfy 
\[\partial_n^{d+1} \theta_{n,j}=\psi_{n,j}.\]
The function $\widehat{\theta}_{n,j}$ has the same support as $\widehat{\psi}_{n,j}$, 
in particular $\widehat{\theta}_{n,j}=0$ unless $2^{-j}\le |\xi|\le 2^{2-j}$.
Let $\kappa$ be a smooth function mapping  $\R^d$ to $[0,\infty)$ with $\kappa(x) = 1$ for $|x| \le 2^{-10}$ and
$\kappa(x) = 0$ for $|x| \ge 2^{-9}$.
For $j \in \Z$ and $j \le 0$,
we set $\kappa_j(x) = 2^{-jd}\kappa_j(2^{-j}x)$.
Define 
\begin{equation}
\label{e:deriv-of-sigma}
\sigma_{n,j} := (- 1_{E_j^{1}}+\kappa_{j-m} * \sum_{I\in \D_{j-m-3}, \rho_I(E_j^1)\le 2}1_{I} ) \times (\theta_{n,j-m}*f).
\end{equation} 
The function $\sigma_{n,j}$ is supported on the set of points which have positive distance
at most $2^{j-m-1}$ from $E_j^1$, in particular it is supported on $E_j^2$ and vanishes on $E_j^1$.

The following lemma follows immediately from the construction.


\begin{lemma}\label{l:contderivative}
For each non-positive $j \in \Z$, $1\le n \le d$, the function
\begin{equation}\label{e:lemfct}
G_{n,j}:=(\theta_{n,j-m}*f)(x) 1_{E_j^1} + \sigma_{n,j}
\end{equation}
is infinitely differentiable in $\R^{d}$.
For a multi-index $\beta\in \N^d$,  one has for almost almost every $x$
\begin{equation}\label{e:derlemfct}
\partial^\beta G_{n,j}(x)=(\partial^\beta \theta_{n,j-m}*f)(x) 1_{E_j^1}(x) + \partial^\beta\sigma_{n,j}(x),
\end{equation}
where $\partial^\beta\sigma_{n,j}$ is the classical derivative at points 
outside the boundary of $E_j^1$
and remains undefined at points of the boundary of $E_j^1$.
\end{lemma}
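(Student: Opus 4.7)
The plan is to reduce the lemma to the routine fact that a product of two smooth functions is smooth. Setting
\[ h := \theta_{n,j-m} * f, \qquad K := \kappa_{j-m} * \sum_{I \in \D_{j-m-3},\, \rho_I(E_j^1) \le 2} 1_I, \]
I would first observe from \eqref{e:lemfct} and \eqref{e:deriv-of-sigma} the pointwise identity
\[ G_{n,j} = h \cdot 1_{E_j^1} + (K - 1_{E_j^1})\, h = K \cdot h \]
valid on all of $\R^d$. This telescoping is the whole point of the corrector $\sigma_{n,j}$: the discontinuous indicator cancels identically in $G_{n,j}$, leaving a product of two smooth objects, and the proof will consist of exploiting this.

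Next I would verify that $K$ and $h$ are each $C^\infty(\R^d)$. Because $U \in \D_{i_0}$ is fixed and $M$ is a finite collection of subcubes of $U$, the tree $T = M_U$ is finite, so $E_j^0$ and $E_j^1$ are bounded and the indicator sum defining $K$ is a bounded, compactly supported function. Convolving this with $\kappa_{j-m} \in C_c^\infty(\R^d)$ then gives $K \in C^\infty(\R^d)$ by standard convolution smoothing. For $h$, the assumption $\no{f \rho_U^{-\alpha}}_\infty < \infty$ forces $f$ to have at most polynomial growth, while $\theta_{n,j-m}$ is Schwartz; hence differentiation under the integral sign is justified to all orders, so $h \in C^\infty(\R^d)$. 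Therefore $G_{n,j} = K h \in C^\infty(\R^d)$, which is the first claim of the lemma.

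Finally, to obtain the derivative formula \eqref{e:derlemfct}, I would apply the Leibniz rule to both representations of $G_{n,j}$. At any point $x \notin \partial E_j^1$, the indicator $1_{E_j^1}$ is locally constant, so its classical partial derivatives of positive order vanish; applying Leibniz to $\sigma_{n,j} = (K - 1_{E_j^1}) h$ at such $x$ gives
\[ \partial^\beta \sigma_{n,j}(x) = \sum_{\gamma < \beta} \binom{\beta}{\gamma} \partial^{\beta - \gamma} K(x)\, \partial^\gamma h(x) + \bigl(K(x) - 1_{E_j^1}(x)\bigr)\, \partial^\beta h(x), \]
and adding $1_{E_j^1}(x) \partial^\beta h(x)$ to both sides reconstitutes the full Leibniz expansion of $\partial^\beta(K h)(x) = \partial^\beta G_{n,j}(x)$. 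Since $\partial E_j^1$ is a finite union of faces of cubes from $\D_j$ and hence has Lebesgue measure zero, \eqref{e:derlemfct} holds almost everywhere.

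I do not anticipate any serious obstacle: the lemma is engineered directly into the construction of $\sigma_{n,j}$, and the only nonobvious move is spotting the algebraic cancellation $G_{n,j} = K h$. Everything else is bookkeeping with Leibniz and elementary convolution smoothing.
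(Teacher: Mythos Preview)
Your proposal is correct and is exactly the unpacking of what the paper means by ``follows immediately from the construction'': the key algebraic identity $G_{n,j}=Kh$ is the reason the authors designed $\sigma_{n,j}$ as in \eqref{e:deriv-of-sigma}, and once that cancellation is observed the smoothness and the Leibniz bookkeeping are routine. The paper offers no further proof, so your write-up simply makes explicit the one-line justification the authors left implicit.
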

  
Define 
\begin{equation}
\label{e:gnj_def}
g_{n,j}:=   \partial_n^{d+1} G_{n,j}.
\end{equation}
Finally, 
we set 
\[\chi = \kappa_{-m} * \sum_{I\in \D_{-m-3}, \rho_I(E_0^1)\le 2}1_{I} , \]
so that $\chi(x) = 1$ for $x \in E_0^{1}$ and $\chi(x) = 0$ for all $x \in F_0^2$.


Define
\begin{equation}
\label{e:gnj_deco}
g:=(\tau_{-m}*f)\chi + \sum_{n=1}^d \sum_{j\le 0} g_{n,j}.
\end{equation}
In particular, $g$ is zero outside $5U$.

We note for later use that
\begin{align}
\label{tauphi}
\tau_j &\in C\Phi_{j-1}^{4\alpha},\\
\label{psipsi}
\psi_{n,j}& \in C{\Psi}_{j-2}^{4\alpha},\\
\label{e:thetaphi}
 \partial^{l} \theta_{n,j}&\in C 2^{j(d+1-|l|)} \Psi_{j-2}^{4\alpha}, \quad |l| \le 3d+3, \\
\label{kappaineq}
\|\partial^l\chi\|_\infty &\le C 2^{m|l|}, \quad |l|\le 3d+3,
\end{align} 
for some $C$ depending on the above choices of the Schwartz functions. 
We shall assume these functions are chosen so as to nearly minimize this $C$, 
hence $C$ can be considered depending only on $d$ and $\alpha$.

\section{Proof of inequality
\texorpdfstring{\eqref{e:unifgnorm}}{(\ref{e:unifgnorm})}}

The construction of $g$ was independent of $p$.
To avoid notational differences between finite and infinite $p$ in this section, 
we shall assume $1\le p<\infty$ and prove
bounds uniformly in $p$. 
The case $p=\infty$ then follows by a limiting process as $p$ tends to $\infty$.

We begin the proof  of \eqref{e:unifgnorm}  by writing 
\[g=h+\sum_{n=1}^d k_n\] 
with
\begin{align}
\label{def:hpart}
h &\coloneq (\tau_{-m}*f) \chi +
\sum_{j\le 0}(\psi_{j-m}*f)1_{E_j^1}, \\
\label{def:knpart}
k_n &\coloneq 
\sum_{j\le 0}  \partial_n^{d+1} \sigma_{n,j} . 
\end{align}
By the triangle inequality, 
it suffices to prove $\no{h}_p\le CS$ and  $\no{k_n}_p\le CS$ for each $n$ separately. We begin with $h$.

\begin{definition}
Let $j<  0$ be an integer. 
We define $\mathcal{I}_j$ to be the set of dyadic cubes in $\D_j$ 
which are contained in $E_{j+1}^1$ but not contained in $E_{j}^{1}$. 
We set $\mathcal{I} =\bigcup_{i < 0} \mathcal{I}_i$.
\end{definition}
 
\begin{lemma}
\label{l:Iispartition}
The family $\mathcal{I}$ partitions $E_0^1$. 
\end{lemma}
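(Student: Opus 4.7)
The plan is to reduce the partition property to a single monotonicity statement, $E_j^1 \subseteq E_{j+1}^1$ for every $j < 0$, after which both covering and disjointness follow from the dyadic nesting structure. Intuitively, each $I \in \mathcal{I}_j$ records the scale at which the chain of dyadic ancestors of a point first exits the sequence $(T_j^1)_{j\le 0}$, and monotonicity is what guarantees uniqueness of this scale.

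To prove the monotonicity, fix $I \in T_j^1$ with a witness $I' \in T_j^0 = M_U \cap \D_j$ satisfying $\rho_I(I') \le 1$, and write $I = Q_{j,k}$, $I' = Q_{j,k'}$; the half-open cube convention forces $k'_n - k_n \in \{0,1\}$ in each coordinate. Let $P$ and $P'$ be the parents in $\D_{j+1}$. Because $P'$ still contains the element of $M$ witnessing $I' \in M_U$ and $j < 0$ keeps $P' \subset U$, we obtain $P' \in T_{j+1}^0$. A short arithmetic check on $\lfloor k_n/2 \rfloor$ and $\lfloor k'_n/2 \rfloor$ then yields $\rho_P(P') \le 1$, so that $P \in T_{j+1}^1$ and $I \subset P \subset E_{j+1}^1$.

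For the covering, given $x \in E_0^1$ denote by $Q_j(x)$ the cube of $\D_j$ containing $x$. Then $Q_0(x) \in T_0^1$, while for $j$ below the minimal scale of the finite collection $M$ one has $T_j^0 = T_j^1 = \emptyset$. Thus $j^*(x) := \max\{j \le 0 : Q_j(x) \notin T_j^1\}$ is well defined and satisfies $j^*(x) \le -1$. Maximality forces $Q_{j^*(x)+1}(x) \in T_{j^*(x)+1}^1$, so $Q_{j^*(x)}(x) \subset E_{j^*(x)+1}^1$; combined with $Q_{j^*(x)}(x) \notin T_{j^*(x)}^1$, this gives $Q_{j^*(x)}(x) \in \mathcal{I}_{j^*(x)}$, a cube of $\mathcal{I}$ containing $x$. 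For disjointness, if $I_1 \in \mathcal{I}_{j_1}$ and $I_2 \in \mathcal{I}_{j_2}$ intersect with $j_1 \le j_2$, dyadic nestedness gives $I_1 \subseteq I_2$: the case $j_1 = j_2$ yields $I_1 = I_2$, while the case $j_1 < j_2$ yields $I_1 \subset E_{j_1+1}^1 \subseteq E_{j_2}^1$ by iterated monotonicity, which contradicts $I_2 \in \D_{j_2} \setminus T_{j_2}^1$. The inclusion $\bigcup \mathcal{I} \subseteq E_0^1$ is likewise immediate from monotonicity.

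The one genuine computation is the parent verification in the monotonicity step; the remainder is routine bookkeeping about the half-open cube convention and the existence of the maximum $j^*(x)$, neither of which I expect to pose a real obstacle.
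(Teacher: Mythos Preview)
Your argument is correct and matches the paper's approach: both rely on the nesting $E_j^1 \subseteq E_{j+1}^1$ (which the paper simply asserts and you verify explicitly via the parent computation), then deduce covering by selecting the extremal scale at which a point enters the tower and disjointness by a contradiction through the dyadic structure.

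One minor slip in your monotonicity step: the condition $\rho_I(I') \le 1$ does not force $k'_n - k_n \in \{0,1\}$. The left neighbour $I' = Q_{j,k-e_n}$ also satisfies $\rho_I(I') = 1$ (the infimum is attained in the limit at the shared face), so the correct range is $k'_n - k_n \in \{-1,0,1\}$. This does not damage the argument, since $\lfloor k'_n/2\rfloor - \lfloor k_n/2\rfloor$ still lies in $\{-1,0,1\}$ and hence $\rho_P(P') \le 1$ as you claim; the rest goes through unchanged.
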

\begin{proof}  
Let $x\in E_0^1$. 
By nesting of the sets $E_j^1$ and finiteness of $M$, 
there is a minimal $j < 0$ such that $x \in E_{j+1}^1$. 
The cube $I\in \D_j$ containing $x$ is then in $\mathcal{I}_j$. 
Hence $\mathcal{I}$ covers $E_0^1$.

To see that the cubes in $\mathcal{I}$ are pairwise disjoint,
assume to get a contradiction that $I \subset J$ for some $i<j<0$ 
and $I \in \mathcal{I}_{i}$ and $J \in \mathcal{I}_{j}$. 
Then $I\subset E_{i+1}^1$ and thus $I\subset E_{j}^{1}$. 
By the dyadic structure, $J\subset E_{j}^{1}$. 
This contradicts the choice of $J$.
\end{proof}

\begin{lemma}
\label{l:hpart}
Let $I \in \mathcal{I}$.
Then 
\begin{equation} 
\label{e:hpart}
\no{h1_I}_p\le C S |I|^{1/p}.
\end{equation}
\end{lemma}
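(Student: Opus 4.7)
The plan is to identify $h$ with a single low-frequency piece $\tau_{j-m}*f$ on each $I \in \mathcal{I}_j$ and then estimate this piece via the size $S$ at an adjacent cube of $M_U$ at scale $j+1$. The central structural claim is that for $x \in I$ with $I \in \mathcal{I}_j$ one has $\chi(x) = 1$, as well as $1_{E_i^1}(x) = 1$ for $j+1 \le i \le 0$ and $1_{E_i^1}(x) = 0$ for $i \le j$. The value of $\chi$ is immediate from $I \subset E_{j+1}^1 \subset E_0^1$ together with $\chi = 1$ on $E_0^1$. The large-scale equality $1_{E_i^1}=1$ follows from the nesting $E_i^1 \subset E_{i+1}^1$, which I verify by tracking dyadic parents: if $J \in T_i^1$ is adjacent to $J' \in T_i^0$, then the scale-$(i+1)$ parent of $J'$ lies in $T_{i+1}^0$ and is either equal to or adjacent to the parent of $J$, so the parent of $J$ lies in $T_{i+1}^1$. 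The small-scale vanishing is the crucial point: if some $J \in T_i^1$ with $i \le j$ intersected $I$, then $J \subset I$, and an adjacent cube $J' \in T_i^0$ would lie either inside $I$ (so that $I$, containing $J' \supset$ some $M$-cube and contained in $U$, would belong to $T_j^0$) or inside a scale-$j$ neighbor of $I$ (so that $I$ would belong to $T_j^1$). Either case contradicts the defining condition $I \not\subset E_j^1$ of $\mathcal{I}_j$.

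Substituting into \eqref{def:hpart}, the telescoping identity
\[
\sum_{i=j+1}^{0} \psi_{i-m} = \sum_{i=j+1}^{0} (\tau_{i-m-1} - \tau_{i-m}) = \tau_{j-m} - \tau_{-m}
\]
collapses the sum on $I$ and yields $h(x) = (\tau_{j-m}*f)(x)$ there. To bound this, let $I^{(j+1)}$ denote the scale-$(j+1)$ dyadic parent of $I$. The condition $I \subset E_{j+1}^1$ means $I^{(j+1)} \in T_{j+1}^1$, so there exists $I' \in M_U \cap \D_{j+1}$ whose closure meets the closure of $I^{(j+1)}$. Since both are scale-$(j+1)$ dyadic cubes, $I \subset I^{(j+1)} \subset 3I'$, hence $\rho_{I'}(x) \le 2$ on $I$. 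By \eqref{tauphi}, $C^{-1}\tau_{j-m} \in \Phi_{j-m-1}^{4\alpha} = \Phi_{(j+1)-m-2}^{4\alpha}$, and applying the definition of the size \eqref{e:defs} at scale $i=j+1$ with the cube $I'$ gives
\[
\no{(\tau_{j-m}*f)\rho_{I'}^{-\alpha}}_p \le CS\, 2^{(j+1)d/p}.
\]
Combining with the pointwise bound $\rho_{I'}^{-\alpha}(x) \ge 2^{-\alpha}$ on $I$ yields $\no{h 1_I}_p \le CS|I|^{1/p}$.

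The main obstacle is the vanishing of $1_{E_i^1}$ on $I$ for all $i \le j$. Without it, the defining sum for $h$ could contribute at arbitrarily fine scales below $j$ and there would be no telescoping identity of the form above. Verifying this vanishing requires exploiting both the dyadic hereditary structure of $T = M_U$ (every ancestor of a cube in $T$ that still sits inside $U$ is again in $T$) and the one-step enlargement in the definition of $T_j^1$. Once this is in place, the rest of the argument is telescoping and a single standard application of the size estimate at the parent scale $j+1$.
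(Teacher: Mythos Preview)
Your proof is correct and follows essentially the same route as the paper: telescope the sum defining $h$ on $I\in\mathcal{I}_j$ to a single $\tau$-piece, then invoke the size $S$ at an adjacent cube in $T_{j+1}^0$. The paper's version is more terse; in particular it does not spell out the nesting $E_i^1\subset E_{i+1}^1$ or the small-scale vanishing, and your telescoping output $(\tau_{j-m}*f)1_I$ is in fact the correct one (the paper writes $(\tau_{i+1-m}*f)1_I$, a harmless off-by-one since either function lies in the required $\Phi$-class). One minor redundancy: once you have established the nesting $E_i^1\subset E_{i+1}^1$, the small-scale vanishing follows immediately from $I\cap E_j^1=\varnothing$ and $E_i^1\subset E_j^1$ for $i\le j$, so the separate direct argument you give is not needed.
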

\begin{proof}
Fix $i< 0$ and $I\in \mathcal{I}_i$.
As $I \subset  E_{i+1}^1$ but $I$ is disjoint from $E_{i}^{1}$, 
we have by telescoping
\[h1_I= (\tau_{i+1-m}*f)1_I.\]
Moreover, 
there is a $K \in T_{i+1}^0$ with $\rho_K(I)\le 2$. 
We have
\[\tau_{i+1-m}\in C\Phi^{4\alpha}_{i-m},\]
and thus by the definition of $S$
in \eqref{e:defs},
\[ \no{h1_I}_p\le C\no{\rho_K (\tau_{i+1-m}*f)}_p\le CS2^{id/p}.\]
\end{proof}

As $\mathcal{I}$ is a partition of $E_0^1=3U$ by Lemma \ref{l:Iispartition} 
and as $h$ is supported on $3U$,
we obtain with Lemma \ref{l:hpart}
\[
\no{ h }_p^p\le \sum_{I\in \mathcal{I}} \no{h1_I}_p^p \le C S
 \sum_{I\in \mathcal{I}} 2^{id} \le CS.
\]
This completes the proof of the bound for $h$.

It remains
to prove the bound for $k_n$.
Note that $\partial_n^{d+1} \sigma_{n,i}$ is supported in $\bigcup B_i^1$.
\begin{lemma}
\label{l:sigma-only}
For each $i \le 0$, $1\le n \le d$, $I\in B_i^1$ and each multi-index $l \in \N^{d}$ with $|l| \le 2d+2$
it holds
\begin{equation}
\label{e:sigmabound}
\no{1_{I} \partial^{l} \sigma_{n,i} }_p^{p}
  \le C^{p} 2^{p(i-m)(d+1-|l|)} S^{p} |I|.
\end{equation}
\end{lemma}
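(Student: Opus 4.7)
The plan is to view $\sigma_{n,i}$ as a product and apply the Leibniz rule, exploiting the dyadic position of $I$ to kill one of the two summands making up the bracket factor in \eqref{e:deriv-of-sigma}, and then to convert the remaining pointwise indicator on $I$ into a weight admissible for the size $S$.

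Write $\sigma_{n,i} = \Lambda_i \cdot (\theta_{n,i-m}*f)$, where $\Lambda_i := -1_{E_i^1} + \Gamma_i$ and
$$\Gamma_i := \kappa_{i-m} * \sum_{K \in \D_{i-m-3},\, \rho_K(E_i^1)\le 2} 1_K.$$
Since $I \in B_i^1 = T_i^2 \setminus T_i^1$ and $I \in \D_i$, dyadic nesting forces $I \cap E_i^1 = \emptyset$, so the first summand of $\Lambda_i$ vanishes on $I$ and $\sigma_{n,i}$ agrees with the smooth product $\Gamma_i \cdot (\theta_{n,i-m}*f)$ on the open set $\R^d \setminus E_i^1$, which contains the interior of $I$. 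Lemma \ref{l:contderivative} and the classical Leibniz rule therefore give almost everywhere on $I$
$$1_I \partial^l \sigma_{n,i} = 1_I \sum_{a + b = l} \binom{l}{a} (\partial^a \Gamma_i)\bigl(\partial^b(\theta_{n,i-m}*f)\bigr),$$
and it suffices to estimate each summand.

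For the smooth factor $\Gamma_i$, the defining cubes are pairwise disjoint, so $\|\sum 1_K\|_\infty \le 1$, and the scaling of $\kappa$ together with Young's inequality yield
$$\|\partial^a \Gamma_i\|_\infty \le \|\partial^a \kappa_{i-m}\|_1 \le C 2^{-(i-m)|a|}.$$
For the second factor, \eqref{e:thetaphi} (applicable since $|b| \le |l| \le 2d+2 \le 3d+3$) says $\phi := C^{-1} 2^{-(i-m)(d+1-|b|)} \partial^b \theta_{n,i-m}$ lies in $\Psi_{i-m-2}^{4\alpha} \subset \Phi_{i-m-2}^{4\alpha}$. Because $I \in T_i^2$, there exists $K \in T_i^0 \subset M_U$ with $\rho_K(I) \le C_d$, and hence $1_I \le C_d^\alpha \rho_K^{-\alpha}$ on $I$. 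Invoking the definition \eqref{e:defs} of $S$,
$$\|1_I \partial^b(\theta_{n,i-m}*f)\|_p \le C 2^{(i-m)(d+1-|b|)} \|\rho_K^{-\alpha} \phi * f\|_p \le C S\, 2^{(i-m)(d+1-|b|)} 2^{id/p}.$$

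Combining the two bounds through Leibniz, the scale factors coalesce, via $|a|+|b|=|l|$, into $2^{(i-m)(d+1-|l|)}$; summing over the $O(1)$ pairs with $a+b=l$ and noting $|I|^{1/p}=2^{id/p}$ yields
$$\|1_I \partial^l \sigma_{n,i}\|_p \le C S\, 2^{(i-m)(d+1-|l|)} |I|^{1/p},$$
which is \eqref{e:sigmabound} after raising to the $p$-th power. The main subtlety is the localization step: finding the cube $K \in T_i^0$ with $\rho_K(I) \le C_d$ that lets the indicator $1_I$ be absorbed into the weight $\rho_K^{-\alpha}$ required to apply the Carleson-type supremum in $S$. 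Once this step is in place, the remainder is bookkeeping of the scaling factor $2^{(i-m)(d+1-|l|)}$ across the Leibniz expansion.
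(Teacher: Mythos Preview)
Your argument is correct and follows essentially the same route as the paper: reduce to the smooth product $\Gamma_i\cdot(\theta_{n,i-m}*f)$ on the interior of $I\in B_i^1$, apply the Leibniz rule, bound $\partial^{a}\Gamma_i$ by $C2^{(m-i)|a|}$, use \eqref{e:thetaphi} for the $\theta$-factor, and invoke the definition of $S$ via a cube $K\in T_i^0$ with $\rho_K(I)\le 2$. The only cosmetic difference is that you bound $\|\partial^{a}\Gamma_i\|_\infty$ through Young's inequality and $\|\partial^{a}\kappa_{i-m}\|_1$, while the paper appeals directly to the rescaled \eqref{kappaineq}; these yield the same estimate.
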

\begin{proof}
We define
\[\tilde{E}_i=\sum_{I\in \D_{i-m-3}, \rho_I(E_i^1)\le 2}1_{I} .\]
We apply the Leibniz rule in the interior of $I$
to estimate the left hand side of \eqref{e:sigmabound} by 
\[
C^{p} \sum_{l_1 + l_2 = l} \int_{I} |\partial^{l_1} (1_{\tilde{E}_i} * \kappa_{i-m})(x)|^{p}| \partial^{l_2} (\theta_{n,i-m} * f)(x)|^{p} \, dx.
\]
We estimate 
\[
1_{I}(x)|\partial^{l_1} (1_{\tilde{E}_i} * \kappa_{i-m})(x)|^{p} \le C 2^{p(m-i)|l_1|} 1_{I}(x)
\]
by a rescaled version of \eqref{kappaineq}.
Using the control \eqref{e:thetaphi} on $\partial^{l_2} \theta_{n,i-m}$,
we see that  
\[\int_{I} |\partial^{l_2} (\theta_{n,i-m}*f)(y)|^{p} \, dy \le 
C^p 2^{p(i-m)(d+1-|l_2|)} S^{p} ,
\]
where we also used the definition of $S$ for a cube $J \in T_i^0$ with
$\rho_J(I)\le 2$.
This concludes the proof of the lemma.
\end{proof}

\begin{lemma}
\label{l:knpart}
For $1 \le n \le d$ and $k_n$ from \eqref{def:knpart},
it holds
\begin{equation}
\label{e:knpart}
\no{k_n}_p \le CS.
\end{equation}
\end{lemma}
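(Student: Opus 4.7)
The plan is to partition the support of $k_n$, which is contained in $5U$, into dyadic cubes and bound the $L^p$ norm on each piece, in the spirit of the argument just carried out for $h$ via the partition $\mathcal{I}$ from Lemma~\ref{l:Iispartition}.

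First I would extend $\mathcal{I}$ to a partition $\widetilde{\mathcal{I}}$ of the full support of $k_n$. Since $\mathcal{I}$ already exhausts $U=E_0^1$, it remains to add dyadic cubes covering $\supp(k_n)\setminus U \subset 5U\setminus U$. Because $B_j^1\setminus U$ is confined to a $2^j$-neighborhood of $\partial U$, a Whitney-type choice of maximal dyadic cubes $K\subset 5U\setminus U$ with $K\in B_j^1$ for some $j\le 0$ and with no strictly larger ancestor in any $B_{j'}^1$ gives a disjoint cover satisfying $\sum_{I\in\widetilde{\mathcal{I}}}|I|\lesssim |U|=1$.

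Second, for each $I\in\widetilde{\mathcal{I}}$ at some scale $i$ I would decompose
\[
k_n\,1_I \;=\; \sum_{j\le 0}\partial_n^{d+1}\sigma_{n,j}\,1_I.
\]
The terms with $j>i$ vanish on $I$: when $I\subset E_{i+1}^0$, upward closure of $T$ places the scale-$j$ cube containing any $x\in I$ into $T_j^0$, which excludes it from $B_j^1$. For $j=i$, Lemma~\ref{l:sigma-only} with $|l|=d+1$ gives $\|\partial_n^{d+1}\sigma_{n,i}\,1_I\|_p \le CS\,|I|^{1/p}$ directly. For $j<i$, the scale-$j$ sub-cubes of $I$ belonging to $B_j^1$ must be adjacent to $E_j^0$, and since $E_j^0\cap I=\emptyset$ they are confined to an $O(2^j)$-collar of $\partial I$, so their number is at most $C\,2^{(i-j)(d-1)}$. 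Lemma~\ref{l:sigma-only} applied to each such sub-cube, together with the disjointness of their supports, yields
\[
\|\partial_n^{d+1}\sigma_{n,j}\,1_I\|_p \;\le\; CS\,\bigl(2^{i(d-1)+j}\bigr)^{1/p}.
\]
Summing a geometric series in $j\le i$ gives $\|k_n\,1_I\|_p \le CS\,|I|^{1/p}$, and disjointness of $\widetilde{\mathcal{I}}$ then closes the argument.

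The delicate point is producing a constant genuinely uniform in $p\in[1,\infty)$, so that the limiting argument to $p=\infty$ announced at the start of the section goes through. The naive series $\sum_{j\le i}2^{j/p}$ carries an unbounded factor $(1-2^{-1/p})^{-1}$ as $p\to\infty$. My plan for absorbing this loss is to exploit that $\partial_n^{d+1}\sigma_{n,j}$ actually concentrates in a shell of thickness $2^{j-m-1}$ inside each cube of $B_j^1$, where it is dimensionally bounded by $CS$ pointwise; this follows by a Leibniz expansion in which the blow-up $2^{(m-j)k}$ coming from $\partial^k(\kappa_{j-m}*\tilde E_j)$ is cancelled by the gain $2^{(j-m)k}$ from $\partial^{d+1-k}\theta_{n,j-m}$ provided by the normalization in \eqref{e:thetaphi}. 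Combining this pointwise control with a Whitney-type count of the number of scales $j$ for which a given $x$ can lie in such a shell (a count bounded in terms of $m$ and $d$ only) replaces the divergent geometric series by a convergent one with a $p$-independent constant, completing the bound $\|k_n\|_p\le CS$.
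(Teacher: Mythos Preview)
Your route through the partition $\mathcal{I}$ is unnecessarily indirect and creates the very obstacle you then struggle to remove. The paper's proof is essentially one line: the cubes in $\bigcup_{i\le 0} B_i^1$ are pairwise disjoint across \emph{all} scales $i$ (if $J\in B_i^1$, $K\in B_k^1$ with $i<k$ and $J\subset K$, the point of $\overline{E_i^0}$ witnessing $\rho_J(E_i^0)\le 2$ lies in $\overline{E_k^0}$ and in a scale-$k$ cube adjacent to $K$, forcing $K\in T_k^1$, a contradiction). Since $\partial_n^{d+1}\sigma_{n,i}$ is supported in $\bigcup B_i^1$, this gives
\[
\|k_n\|_p^p \;=\; \sum_{i\le 0}\sum_{I\in B_i^1}\|1_I\,\partial_n^{d+1}\sigma_{n,i}\|_p^p
\;\le\; C^p S^p \sum_{i\le 0}\sum_{I\in B_i^1}|I|
\;\le\; C^p S^p\,|3U|,
\]
with constant manifestly independent of both $p$ and $m$.

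Your approach, by contrast, fixes $I\in\mathcal{I}_i$ and sums $\|\partial_n^{d+1}\sigma_{n,j}\,1_I\|_p$ over $j\le i$ via the triangle inequality, producing the factor $(1-2^{-1/p})^{-1}$. Your proposed fix---a Whitney-type overlap count ``bounded in terms of $m$ and $d$ only''---is dangerous as stated: if the bound genuinely depends on $m$, the resulting constant depends on $m$, which the theorem forbids. In fact the overlap is exactly one, precisely by the cross-scale disjointness of the $B_j^1$ you overlooked; had you noticed this, you could have replaced your triangle inequality by the identity $\|k_n 1_I\|_p^p=\sum_{j\le i}\|\partial_n^{d+1}\sigma_{n,j}1_I\|_p^p$, and the geometric series in the $p$-th powers converges with constant $2$ independent of $p$. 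There are also small slips (you write $I\subset E_{i+1}^0$ where $I\subset E_{i+1}^1$ is what $\mathcal{I}_i$ gives; $\mathcal{I}$ already covers $3U$, not just $U$), but the substantive issue is the missed disjointness.
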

\begin{proof}
We estimate
\[
\no{k_n}_p^p \le \sum_{i\le 0} \sum_{I\in B_{i}^{1}}\no{1_{I} \partial_n^{d+1}\sigma_{n,i}}_p^p 
\le \sum_{i\le 0} \sum_{I\in B_{i}^{1}} C^p  S^p |I|
\]
where we applied Lemma \ref{l:sigma-only} with $|l| = d+1$. 
To conclude \eqref{e:knpart}, it now suffices to
show that the cubes in  $\bigcup_{i\le 0}B_i^1$
are pairwise disjoint and contained in $3U$.
Containment in $3U$ is clear by definition. 
To see disjointness, first note that the cubes in $B_i^1$ are pairwise disjoint for each $i$. 
Now let $i<k\le 0$ and assume $J\in B_i^1$ and $K\in B_k^1$ with $J\subset K$.
Then there is a point $x$ in the closure of $E_i^0$ such that $\rho_J(x)\le 1$. 
But then $\rho_K(x)\le 1$ and $x$ must be in the closure of $E_k^0$, 
which is a contradiction to $K \in B_k^1$.
This proves disjointness and completes to proof of the lemma.
\end{proof}

\section{Proof of Inequality \texorpdfstring{\eqref{e:unif-gtheo}}{(\ref{e:unif-gtheo})}}

In this section, we fix $1\le p\le \infty$. In particular, notation in this section applies to $p=\infty$ as well.

We begin with a preparatory lemma that allows us to commute a weight past certain convolution and truncation operators.
Let $1_F$ denote the characteristic function of a set $F$.

\begin{lemma}\label{l:commutechii}
For all $i\in \Z$,
all finite cubes $I\in \D_i$, 
all Borel sets $F \subset \R^{d}$, 
all functions $\phi$ with 
\[
|\phi(x)| \le 2^{(m-i)d} \rho_{[0,2^{i-m}]^{d}}(x)^{-4\alpha},
\]
and for all functions 
$g$ in $L^p(\R^{d})$,  
we have 
\begin{equation}\label{e:comwc}
 \no{\rho_I^{-3\alpha} (\phi* (1_F g))}_{p}
 \le 
C \no{\rho_I^{-\alpha} g}_{p} \rho_I(F)^{-2\alpha}.  
\end{equation}
\end{lemma}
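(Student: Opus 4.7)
The plan is to reduce the estimate to a weighted Schur test. First I extract the factor $\rho_I(F)^{-2\alpha}$ from the convolution by pointwise control of $1_F$, and then bound the resulting weighted convolution operator on $L^p$ uniformly in all parameters.

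Set $R = \rho_I(F) \ge 1$. Since $\rho_I(y) \ge R$ for $y \in F$, we have the pointwise bound $1_F(y) \le R^{-2\alpha}\rho_I(y)^{2\alpha}$, so
\[
|\phi*(1_F g)(x)| \le R^{-2\alpha}\int|\phi(x-y)|\,\rho_I(y)^{2\alpha}|g(y)|\,dy.
\]
Writing $h = \rho_I^{-\alpha}|g|$, the claim reduces to the uniform inequality
\[
\no{\rho_I^{-3\alpha}(|\phi|*(\rho_I^{3\alpha}h))}_p \le C\no{h}_p,
\]
which is the $L^p$ boundedness of the integral operator with kernel $K(x,y) = \rho_I(x)^{-3\alpha}\rho_I(y)^{3\alpha}|\phi(x-y)|$.

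I would verify both Schur conditions $\sup_x\int K(x,y)\,dy \le C$ and $\sup_y\int K(x,y)\,dx \le C$ using the quasi-triangle inequality $\rho_I(y) \le 2\rho_I(x) + 2^{-i}|x-y|$, which is immediate from the definition of $\rho_I$ via the triangle inequality for the distance to the center of $I$. I split the integration region into a near part $\{|x-y| \le 2^i\rho_I(x)\}$ and a far part $\{|x-y| > 2^i\rho_I(x)\}$ (and analogously with $x$ and $y$ swapped for the other Schur integral). In the near region, $\rho_I(y) \le C\rho_I(x)$, so $K(x,y) \le C|\phi(x-y)|$, whose $L^1$ norm is $\le C$ by the scale-invariance of $\int \rho_{[0,1]^d}^{-4\alpha}$ together with $4\alpha > d$. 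In the far region, $\rho_I(y)^{3\alpha} \le C2^{-3\alpha i}|x-y|^{3\alpha}$ and $|\phi(z)| \le C 2^{(m-i)d}(2^{m-i}|z|)^{-4\alpha}$ on $|z| \ge 2^{i-m}$; combining these, $K$ is dominated by a power of $|x-y|$ with exponent $-\alpha$, which is integrable over $\{|z| \ge 2^i\rho_I(x)\}$ since $\alpha > d$.

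The main technical point will be the bookkeeping of scales in the far region. Collecting all the powers of $2$ and integrating via $\int_{|z|\ge R_0}|z|^{-\alpha}\,dz \le CR_0^{d-\alpha}$ yields an $m$-dependent factor $2^{m(d-4\alpha)}$ together with a residual $\rho_I(x)^{d-4\alpha}$, both bounded by $1$ because $m\ge 0$, $\rho_I(x)\ge 1$, and $d-4\alpha<0$. The symmetric Schur integral is handled by the analogous splitting and produces a residual $\rho_I(y)^{d-\alpha}\le 1$. Schur's test then yields the claim for all $p \in [1,\infty]$, and combined with the extracted factor $R^{-2\alpha}$ this proves the lemma.
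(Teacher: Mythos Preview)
Your argument is correct and amounts to the same weight-commutation idea as in the paper, packaged slightly differently. The paper translates so that $I$ contains the origin, compares $|y|$ with $2|x|$, and in each case absorbs three powers of $\rho_I^{-\alpha}$ either from $\rho_I(x)^{-3\alpha}$ or from $\rho_J(x-y)^{-3\alpha}$ to produce $\rho_I(y)^{-3\alpha}$; this yields the single pointwise bound
\[
\rho_I(x)^{-3\alpha}|\phi(x-y)|\le C\,2^{(m-i)d}\rho_J(x-y)^{-\alpha}\rho_I(y)^{-3\alpha},
\]
from which Young's inequality gives \eqref{e:comwc} directly (the factor $\rho_I(F)^{-2\alpha}$ is extracted at the end from $\rho_I(y)^{-2\alpha}1_F(y)$). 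Your extraction of $\rho_I(F)^{-2\alpha}$ at the outset and subsequent Schur test with the near/far split in $|x-y|$ is an equivalent route; the paper's pointwise kernel bound is precisely what would give both of your Schur integrals simultaneously.

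One small point: for the second Schur integral your ``analogous'' near region $|x-y|\le 2^{i}\rho_I(y)$ does not by itself force $\rho_I(y)\le C\rho_I(x)$, since the quasi-triangle inequality then only gives $\rho_I(y)\le 2\rho_I(x)+\rho_I(y)$. You should take the threshold at, say, $|x-y|\le 2^{i-1}\rho_I(y)$; then $\rho_I(x)\ge\tfrac12(\rho_I(y)-2^{-i}|x-y|)\ge\tfrac14\rho_I(y)$ and the near-region bound goes through. With this trivial fix your proof is complete.
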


We remark that given $k \le 0$ and $\phi \in \Phi_{i-m+k}^{4\alpha}$,
the function $2^{kd}\phi$ satisfies the assumption of the lemma.

\begin{proof}
By simultaneous translation of $I$, $F$, and $g$, if necessary,
 we may assume that $I$ contains the origin.
Let $J\in \D$ be a cube  in $\D_{i-m}$ containing zero,
so that
\[|\phi|\le C2^{(m-i)d}\rho_J^{-4\alpha}.\]

For every $x\in \R^d$, we estimate 
\begin{equation}
\label{e:bigint}
|\rho_I^{-3\alpha} (\phi* (1_F g))(x)|
\le 
C2^{(m-i)d}\int 
\rho_I^{-3\alpha}(x) \rho_J^{-4\alpha} (x-y) 
1_F(y) |g|(y)\, dy. \end{equation}
For $|y|\ge 2|x|$, we estimate 
\[\rho_I(x)\ge 1, \quad \rho_J(x-y)\ge C\rho_I(y).\]
For $|y|\le 2|x|$, we estimate 
\[\rho_J(x-y) \ge 1,\quad \rho_I(x)\ge C\rho_I(y).\]
We thus obtain for \eqref{e:bigint} an upper bound by
\[
C2^{(m-i)d}
\int 
 \rho_J^{-\alpha} (x-y) 
(\rho_I^{-2\alpha}1_F)(y) (\rho_I^{-\alpha}|g|)(y)\, dy. 
\]
Using Young's convolution inequality and the uniformly bounded $L^1$ norm of $2^{(m-i)d}\rho_J^{-\alpha}$,
one obtains the lemma.
\end{proof}

We write almost everywhere
\begin{equation}\label{fminusg}
f-g =(\tau_{-m} *f) (1-\chi)+   
  \sum_{j\le  0}
(\psi_{j-m}*f) 1_{F_j^1}
-   \sum_{n=1}^d  
\partial_n^{d+1} \sigma_{n,j},
\end{equation}
which follows from
\begin{align*}
f &=(\tau_{-m}*f) +\sum_{j\le  0} \psi_{j-m} *f,\\
g &= (\tau_{-m} *f) \chi+  \sum_{j\le  0} (\psi_{j-m}*f) 1_{E_j^1}+ \sum_{n=1}^d \partial_n^{d+1}\sigma_{n,j}.
\end{align*}

\begin{lemma}\label{l:f-gI1}
For all integers $i\le j\le  0$ and all $I\in T_i^0$, we have
\begin{align}
\label{e:ilessjtau}
\no{\rho_I^{-3\alpha} \phi_{i-m} *((\tau_{-m}*f) (1-\chi))}_p
&\le CS \rho_I(F_0^1)^{-2\alpha}, \\
\label{e:ilessj}
\no{\rho_I^{-3\alpha} \phi_{i-m} *(
(\psi_{j-m}*f) 1_{F_j^1})}_p
&\le CS2^{jd/p}\rho_I(F_j^1)^{-2\alpha}, \\
\label{e:ilessjchi}
\nos{\rho_I^{-3\alpha} \phi_{i-m} *
\left( 
\partial_n^{d+1}\sigma_{n,j}\right)}_p
&\le CS2^{jd/p}\rho_I(F_j^1)^{-2\alpha}.
\end{align}
The right-hand sides of both of the inequalities \eqref{e:ilessj} and \eqref{e:ilessjchi} are bounded by
\begin{equation}
\label{e:better}
CS2^{-id/p'} 2^{(\alpha-d/p)(i-j)}\no{1_I\sum_{K\in B_i^0} \rho_K^{-\alpha}}_1.
\end{equation}
\end{lemma}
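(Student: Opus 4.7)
The plan is to treat the three inequalities \eqref{e:ilessjtau}, \eqref{e:ilessj}, \eqref{e:ilessjchi} by a uniform three-step procedure. First, identify that each of the three pieces of $f-g$ in \eqref{fminusg} is supported in (or pointwise dominated on) a natural stopping set: the factor $1-\chi$ forces support in $F_0^1$ because $\chi \equiv 1$ on $E_0^1$; the factor $1_{F_j^1}$ is explicit; and $\sigma_{n,j}$ is supported in $\bigcup B_j^1 \subset F_j^1$ by construction, so $\partial_n^{d+1}\sigma_{n,j}$ is as well. Second, apply Lemma~\ref{l:commutechii} to commute the weight $\rho_I^{-3\alpha}$ past the convolution with $\phi_{i-m}$ at the cost of a factor $\rho_I(F)^{-2\alpha}$. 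Third, bound the residual $\rho_I^{-\alpha}$-weighted $L^p$-norm by invoking either the definition \eqref{e:defs} of $S$ or Lemma~\ref{l:sigma-only}.

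For \eqref{e:ilessjtau} I use $|1-\chi| \le 1$ together with Lemma~\ref{l:commutechii} to reduce to controlling $\no{\rho_I^{-\alpha}(\tau_{-m}*f)}_p$. Note that $U \in \D_0 \cap M_U$, that $\tau_{-m} \in C\Phi_{-m-2}^{4\alpha}$ by \eqref{tauphi} (since scales $-m-1$ and $-m-2$ differ by only one step), and that $I \subset U$ forces $\rho_I \ge \rho_U$; the definition \eqref{e:defs} then gives $\no{\rho_I^{-\alpha}(\tau_{-m}*f)}_p \le \no{\rho_U^{-\alpha}(\tau_{-m}*f)}_p \le CS$, completing \eqref{e:ilessjtau}. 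The proof of \eqref{e:ilessj} is entirely parallel: apply Lemma~\ref{l:commutechii} with $F = F_j^1$ and $g = \psi_{j-m}*f$; since $\psi_{j-m} \in C\Phi_{j-m-2}^{4\alpha}$ by \eqref{psipsi} and the $\D_j$-ancestor $J \supset I$ lies in $T_j^0 = M_U \cap \D_j$ (because $I \in T_i^0 \subset M_U$), \eqref{e:defs} yields $\no{\rho_J^{-\alpha}(\psi_{j-m}*f)}_p \le S \cdot 2^{jd/p}$, and $\rho_I \ge \rho_J$ transfers this bound to $I$.

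For \eqref{e:ilessjchi} the reduction via Lemma~\ref{l:commutechii} leaves $\no{\rho_I^{-\alpha}\partial_n^{d+1}\sigma_{n,j}}_p$ to control. I plan to decompose this norm into contributions from cubes $J \in B_j^1$, apply Lemma~\ref{l:sigma-only} with $|l| = d+1$ to get $\no{1_J \partial_n^{d+1}\sigma_{n,j}}_p \le CS\, 2^{jd/p}$, estimate $\rho_I^{-\alpha}$ pointwise on $J$ by the constant $\rho_I(J)^{-\alpha}$, and sum using the standard tail bound $\sum_{J \in \D_j} \rho_I(J)^{-\alpha p} \le C\, 2^{d(i-j)}$, which holds because $\alpha p > d$. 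This yields a bound $CS\, 2^{id/p} \le CS\, 2^{jd/p}$, in fact a mild improvement of what is stated.

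The final claim that \eqref{e:better} dominates the right-hand sides of \eqref{e:ilessj} and \eqref{e:ilessjchi} reduces, after cancelling common exponents, to the geometric inequality $|I|\, \rho_I(F_j^1)^{-2\alpha}\, 2^{\alpha(j-i)} \le C\, \no{1_I \sum_{K \in B_i^0} \rho_K^{-\alpha}}_1$. I would prove this by introducing $D$, the Euclidean distance from $I$ to the boundary of $E_i^0$: because $I$ lies inside its $\D_j$-ancestor in $T_j^0$ and because $E_i^0 \subset E_j^0$, one has $\rho_I(F_j^1) \ge c(1 + (D + 2^j)/2^i)$, while the $B_i^0$-cube sitting immediately across this boundary contributes at least $c |I| (1 + D/2^i)^{-\alpha}$ to the integral. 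Splitting into the cases $D \lesssim 2^j$ and $D \gtrsim 2^j$ and comparing exponents then confirms the inequality. The subtlest step, and likely the main technical obstacle, is this last geometric bookkeeping: one must verify the two bounds uniformly in the position of $I$, and in particular ensure that the requisite $B_i^0$-cube always exists, which follows from $E_i^0 \subset U$ being bounded and hence $B_i^0 \ne \emptyset$ whenever $T_i^0 \ne \emptyset$.
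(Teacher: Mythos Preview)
Your approach to \eqref{e:ilessjtau} and \eqref{e:ilessj} is identical to the paper's: Lemma~\ref{l:commutechii} followed by passing from $\rho_I^{-\alpha}$ to $\rho_U^{-\alpha}$ (respectively $\rho_J^{-\alpha}$ for the $\D_j$-ancestor $J$ of $I$) and invoking the definition of $S$.

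For \eqref{e:ilessjchi} your plan is again essentially the paper's, but your stated tail bound $\sum_{J\in\D_j}\rho_I(J)^{-\alpha p}\le C\,2^{d(i-j)}$ is false as written: since $i\le j$, the single cube $J\in\D_j$ containing $I$ already contributes $\rho_I(J)^{-\alpha p}=1$, which exceeds $2^{d(i-j)}$. What is true is that the full sum over $\D_j$ is $\le C$, and that the sum restricted to $B_j^1$ is even $\le C\,2^{-\alpha p(j-i)}$, because every $J\in B_j^1$ lies in $F_j^1$ and is therefore at Euclidean distance $\ge 2^j$ from $I\subset E_j^0$, forcing $\rho_I(J)\gtrsim 2^{j-i}$. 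Either of these correct bounds already yields $\no{\rho_I^{-\alpha}\partial_n^{d+1}\sigma_{n,j}}_p\le CS\,2^{jd/p}$, which is what \eqref{e:ilessjchi} asks; the sharper $CS\,2^{id/p}$ you claim is not needed. The paper does this via the $\ell^1$ sum $\sum_{K\in B_j^1}\sup_{x\in K}\rho_I^{-\alpha}(x)\le C$ rather than the $\ell^p$ sum, but the mechanism is the same.

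For \eqref{e:better} your route via the distance parameter $D=\dist(I,\partial E_i^0)$ and the case split $D\lesssim 2^j$ versus $D\gtrsim 2^j$ does work, but it is more laborious than the paper's argument. The paper avoids any case analysis by writing
\[
\rho_I(F_j^1)^{-2\alpha}\le \rho_I(F_j^1)^{-\alpha}\,\rho_I(F_i^1)^{-\alpha}\le 2^{-\alpha(j-i)}\,\rho_I(F_i^1)^{-\alpha},
\]
using only the nesting $F_j^1\subset F_i^1$ and the width $\ge 2^j$ of the $B_j^0$-layer between $E_j^0$ and $F_j^1$. It then bounds $\rho_I(F_i^1)^{-\alpha}$ by $\inf_{x\in I}\sum_{K\in B_i^0}\rho_K^{-\alpha}(x)\le 2^{-id}\no{1_I\sum_{K\in B_i^0}\rho_K^{-\alpha}}_1$, noting that some $K\in B_i^0$ is at least as close to $I$ as $F_i^1$ is. This two-line reduction replaces your distance bookkeeping entirely and makes the existence of the relevant $B_i^0$-cube transparent.
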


\begin{proof} 
We begin with \eqref{e:ilessjtau}.
As $1-\chi$ is supported in $F_0^{1}$,
we use Lemma \ref{l:commutechii} with $\phi_{i-m}\in \Phi^{4\alpha}_{i-m}$ 
and then $I \subset U$ to estimate the left-hand-side of \eqref{e:ilessjtau} by
\[C\rho_I(F_0^1)^{-2\alpha}\no{\rho_I^{-\alpha} (\tau_{-m}*f)}_p 
\le C\rho_I(F_0^1)^{-2\alpha}\no{\rho_{U}^{-\alpha} (\tau_{-m}*f)}_p.\]
With the definition of $S$, using the control \eqref{tauphi} of $\tau_{-m}$ and $U\in T$, 
this proves  \eqref{e:ilessjtau}.

To estimate the left-hand-side of \eqref{e:ilessj},
choose a cube $J\in \D_j$ that contains $I$.
Using  Lemma \ref{l:commutechii} as above, 
we estimate the left-hand-side of \eqref{e:ilessj} by 
\[
C\rho_I(F_j^1)^{-2\alpha}\no{\rho_I^{-\alpha} (\psi_{j-m}*f)}_p \le 
C\rho_I(F_j^1)^{-2\alpha}\no{\rho_J^{-\alpha} (\psi_{j-m}*f)}_p .
\]
Using the control \eqref{psipsi} of $\psi_{j-m}$ and $J\in T$, this proves  \eqref{e:ilessj}. 

As $\sigma_{n,j}$ is supported in $E_{j}^{2} \cap F_{j}^{1}$,
we estimate the left hand side of \eqref{e:ilessjchi} again with Lemma \ref{l:commutechii} by 
a constant multiple of 
\[
 \rho_I(F_j^1)^{-2\alpha} \no{\rho_I^{-\alpha} \sum_{K\in B_j^1} 1_K
\partial_n^{d+1}\sigma_{n,j}}_p  
\le 
C S \rho_I(F_j^1)^{-2\alpha} 2^{jd/p}
\sum_{K\in B_j^1} \sup_{x\in K} \rho_I^{-\alpha}(x).
\]
In the last inequality, 
we used equation \eqref{e:sigmabound} from Lemma \ref{l:sigma-only} 
and that each $K\in B_j^1$ is contained in $F_j^1$.
This proves \eqref{e:ilessjchi}, 
because $i\le j$ and thus for every integer $n\ge 0$ 
there are at most $C(1+|n|)^{d-1}$ cubes $K\in B_j^1$ 
such that 
\[|n|+1\le \inf_{x\in K} \rho_I(x)\le |n|+2 ,\] 
and thus
\[\sum_{K\in B_j^1} \sup_{x\in K} \rho_I^{-\alpha}(x)\le  C\sum_{n\in \mathbb{Z} }
(1+|n|)^{d-\alpha-1}\le C.\]

We turn to showing the upper bound \eqref{e:better}.
By nesting, $\rho_I(F_{j}^{1}) \ge \rho_I(F_{i}^{1})$.
On the other hand, there is a cube in $B_j^0$ between $I$ and $F_j^1$.
Hence $\rho_I(F_{j}^{1})\ge 2^{j-i}$, 
and we estimate the  right-hand side of \eqref{e:ilessj} and \eqref{e:ilessjchi} by
\[ CS2^{jd/p} 2^{-\alpha (j-i)} \rho_I(F_i^1)^{-\alpha} \le 
CS2^{id/p}   2^{-(\alpha-d/p) (j-i)}\rho_I(F_i^1)^{-\alpha}.\]
As there is a cube in $B_i^0$ as close to $I$ as to $F_i^1$, 
we have
\[\rho_I(F_{i}^{1})^{-\alpha}
\le \inf_{x\in I}\sum_{K\in B_i^0} \rho_K(x)^{-\alpha}
\le 2^{-id}\no{1_I\sum_{K\in B_i^0} \rho_K^{-\alpha}}_1.\]
This proves \eqref{e:better} and completes the proof of the lemma.
\end{proof}

\begin{lemma}\label{l:f-gI3}
We have for all integers  $j \le i\le 0$,
every dyadic cube $I\in T_i^0$ 
and every $1\le n\le d$,
\begin{multline}
\label{smallj}
\nos{\rho_I^{-3\alpha} \phi_{i-m} *\left(
(\psi_{n,j-m}*f)1_{F_j^1} - 
\partial_n^{d+1}\sigma_{n,j}\right)}_p \\
\le
C S 2^{(j-i)(1+d/p)}
2^{-id/p'}\sum_{J\in B_j^0}
\no{\rho_I^{-\alpha}1_{J}\, }_1 .
\end{multline}
\end{lemma}

\begin{proof}
Note that the term in brackets on the left-hand-side of 
\eqref{smallj} has a smooth extension to $\R^d$ by
Lemma \ref{l:contderivative}.
By a $(d+1)$-fold integration by parts and 
the sufficient decay of $\partial_n^k\phi_{i-m}$ for all $1\le k\le d$, 
we rewrite and then estimate the left hand side of \eqref{smallj} as follows:
\begin{gather}
\nos{\rho_I^{-3\alpha} (\partial_n^{d+1} \phi_{i-m}) *\left(
(\theta_{n,j-m}*f)1_{F_j^1}- 
\sigma_{n,j}\right)}_p \nonumber \\
\label{firstprim}
\le 
\no{\rho_I^{-3\alpha} (\partial_n^{d+1} \phi_{i-m} *(
(\theta_{n,j-m}*f)1_{F_j^1}))}_p \\
\label{secondprim}
+\sum_{J\in B_j^1}\no{\rho_I^{-3\alpha} (\partial_n^{d+1} \phi_{i-m} *
(1_J\sigma_{n,j}))}_p.
\end{gather}

We first estimate \eqref{secondprim}.
As $\partial_n^{d+1} \phi_{i-m}\in 2^{(m-i)(d+1)}\Phi_{m-i}^{4\alpha}$,
we estimate each summand in \eqref{secondprim} first with Lemma \ref{l:commutechii}
and then apply equation \eqref{e:sigmabound} from Lemma \ref{l:sigma-only}
to obtain an upper bound by 
\begin{multline*}
C2^{(m-i)(d+1)} \no{\rho_I^{-\alpha}
1_J \sigma_{n,j}}_p  \\
\le
CS2^{(m-i)(d+1)}2^{(j-m)(d+1)}2^{jd/p}\no{\rho_I^{-\alpha}1_J}_\infty  \\ 
\le
CS2^{(j-i)(1+d/p)}2^{-id/p'}\no{\rho_I^{-\alpha}1_J}_1 .
\end{multline*}
We used $j<i$ in the last inequality to conclude that $\rho_I^{-\alpha}$ is essentially constant on $J$.
The desired estimate for \eqref{secondprim} follows from summing over  $J\in B_j^1$.

To estimate \eqref{firstprim}, 
we write it as 
\[\no{\rho_I^{-3\alpha} \partial_n^{d+1} \phi_{i-m} *\tau_{j-m}*(
(\theta_{n,j-m}*f)1_{F_j^1})}_p ,\]
because $\widehat{\tau}_{j-m+1}$ is constant one on the support of $ \widehat {\partial_n^{d+1}\phi}_{i-m}$
when $i\ge j+1$.
By Lemma \ref{l:commutechii},
this is estimated by 
\begin{multline*}  C 2^{(m-i)(d+1)}\no{\rho_I^{-\alpha}  (\tau_{j-m+1}*((\theta_{n,j-m}*f)1_{F_j^1}))}_p  \\
\le C2^{(m-i)(d+1)}\sum_{K\in \D_{j}} \rho_I^{-\alpha}(K) \no{1_K  (\tau_{j-m+1}*( (\theta_{n,j-m}*f)1_{F_j^1}))}_p .
\end{multline*}

Fix $K\in \D_j$. 
First assume $K\subset E_j^1$.
Because $1_{K} \le \rho_K^{-3\alpha}$,
it follows by Lemma \ref{l:commutechii} that  
\begin{multline*}\no{1_K  (\tau_{j-m+1}*(
(\theta_{n,j-m}*f)1_{F_j^1}))}_p  
\le C \rho_K(F_j^1)^{-2\alpha}\no{\rho_K^{-
\alpha}  
(\theta_{n,j-m}*f)}_p  \\
\le C S2^{(j-m)(d+1)}2^{jd/p}\rho_K(F_j^1)^{-2\alpha} 
\le C S2^{(j-m)(d+1)}2^{jd/p}
\sum_{J\in B_j^1}\rho_J(K)^{-2\alpha}.
\end{multline*}
The penultimate inequality followed by $K\subset E_j^1$,
and thus there existing a cube $K'\in T_j$ with distance at most $2^{j+1}$ from $K$, 
and applying the definition of $S$ to $K'$.
The last inequality followed by 
choosing a $J$ in $B_j^1$ closest to $K$ so that
\[\rho_K(F_j^1)^{-2\alpha}\le C\rho_{K}(J)^{-2\alpha}= C\rho_{J}(K)^{-2\alpha},\]
where the last identity follows from equal side length of $J$ and $K$.

Assume then $K\subset F_j^1$.
As $\widehat{\theta}_{n,j-m}$ vanishes on the support of $\widehat{\tau}_{j-m+1}$ 
and $1_{F_j^1}+ 1_{E_j^1}=1$, 
we have 
\begin{multline}\no{1_K  (\tau_{j-m+1}*(
(\theta_{n, j-m}*f)1_{F_j^1}))}_p =\no{1_K  (\tau_{j-m+1}*(
(\theta_{n, j-m}*f)1_{E_j^1}))}_p \\
\label{e:aux43}
\le C \rho_K(E_j^1)^{-2\alpha}
\no{\rho_K^{-1}(\theta_{n, j-m}*f)}_p
\end{multline}
Now let $K'$ be a cube in
$B_j^0$ closest to $K$, so that
\[\rho_K(E_j^1)^{-1}\le C\rho_K(K')^{-1}.\]
With the triangle inequality, we conclude
\[\rho_{K'}(x)\le C\rho_{K}(x)\rho_{K}(E_j^1)\]
so that 
\[
\rho_{K}(x) \ge \rho_{K'}(x) \rho_{K}(E_j^1)^{-1} \ge \rho_{K'}(x) \rho_{K}(E_j^1)^{-\alpha}
\]
and hence we can estimate \eqref{e:aux43} by
\[  \rho_K(K')^{-\alpha}
\no{\rho_{K'}^{-1}(\theta_{n,j-m}*f)}_p \le CS2^{(j-m)(d+1)}2^{jd/p}
\sum_{J\in B_j^0}\rho_{J}(K)^{-\alpha}  .
\]
Putting the above estimates together, we estimate \eqref{firstprim} by 
\begin{multline*}
\allowdisplaybreaks
CS2^{(j-i)(d+1)}2^{jd/p}\sum_{K\in \D_j}
\rho_{I}^{-\alpha}(K)\sum_{J\in B_j^1}\rho_J(K)^{-\alpha} \\
\le CS2^{(j-i)(d+1)}
2^{-jd/p'}
\no{
\rho_{I}^{-\alpha}\sum_{J\in B_j^1}\rho_J^{-\alpha}}_1  \\
\le CS2^{(j-i)(d/p +1)}2^{jd/p} 
\sum_{J\in B_j^1} \no{
\rho_{I}^{-\alpha}1_J}_\infty \\
\le  CS2^{(j-i)(d/p +1)}2^{-id/p'} 
\sum_{J\in B_j^1} \no{
\rho_{I}^{-\alpha}1_J}_1.
\end{multline*}
Here we used $j\le i$ in the estimation of integrals.
\end{proof}

We are now ready to prove inequality \eqref{e:unif-gtheo}.
We claim that it suffices to prove inequality \eqref{e:unif-gtheo} under the additional assumption $J\in T$. 
Assume we have proven the inequality under this additional assumption.
Let $J$ be arbitrary and consider the collection $\mathcal{J}$ of maximal dyadic cubes $I$ satisfying $I\subset J$ and $I\in T$. 
Applying the assumed inequality with $J$ replaced by an element of $\mathcal{J}$ and summing
over the disjoint cubes in $\mathcal{J}$ 
we obtain the inequality for the given $J$.
Hence we can make the auxiliary assumption.

Following the decomposition \eqref{fminusg}
and splitting the sum on the left hand side of \eqref{e:unif-gtheo} along the cases $k<i$ and $i\le k$,
we may apply Lemma \ref{l:f-gI1} to the terms with $k<i$ and Lemma \ref{l:f-gI3} to the terms with $i\le k$.
Hence we are left with estimating
\begin{gather}
\label{usel23}
  CS\sum_{i\le j}\sum_{I\in T_i^0, I\subset J}\sum_{i\le k\le 0}2^{\epsilon(i-k)}\sum_{K\in B_i^0}\no{1_I \rho_K^{-\alpha}}_1  \\
\label{usel24}
+ CS\sum_{i\le j}\sum_{I\in T_i^0, I\subset J} \sum_{k<i}
 2^{\epsilon(k-i)}
\sum_{K\in B_k^0}\no{\rho_I^{-\alpha}1_K}_1,
\end{gather}
where $\epsilon=\min(\alpha-d/p,1+d/p)$,
by the right hand side of equation \eqref{e:unif-gtheo}.
We estimate \eqref{usel23} and \eqref{usel24} separately.

We first estimate \eqref{usel23} from above by
\begin{multline*}CS\sum_{i\le j}\sum_{I\in T_i^0, I\subset J}\no{1_I \sum_{K\in B_i^0}\rho_K^{-\alpha}}_1 \le CS\sum_{i\le j}\no{1_J \sum_{K\in B_i^0}\rho_K^{-\alpha}}_1 \\
\le CS\sum_{i\le j}\left(\sum_{K\in B_i^0, K\subset 3J}2^{id}+ \no{1_J\sum_{K\in B_i^{0}, K\not\subset 3J}
\rho_K^{-\alpha}}_1\right)\\
\le CS2^{jd}+ CS\sum_{i\le j}
2^{(\alpha-d)(i-j)/2}2^{jd}\le CS2^{jd}.
\end{multline*}
Then we estimate \eqref{usel24} by
\begin{multline*}
CS\sum_{k<j} \sum_{k<i\le j}\sum_{I\in T_i^0, I\subset J}
 2^{-\epsilon|k-i|}
\sum_{K\in B_k^0}\no{\rho_I^{-\alpha}1_K}_1\\
\le CS\sum_{k<j} \sum_{k<i\le j}
 2^{-\epsilon|k-i|}
\sum_{K\in B_k^0}\no{\rho_J^{-\alpha}1_K}_1\\
\le CS\sum_{k<j} 
\sum_{K\in B_k^0}\no{\rho_J^{-\alpha}1_K}_1 \le CS\no{\rho_J^{-\alpha}}_1
\le CS2^{jd}.
\end{multline*}
These are the desired estimates for
\eqref{usel23} and \eqref{usel24}. 
The proof of inequality \eqref{e:unif-gtheo} is complete.

\section{Proof of Inequality \texorpdfstring{\eqref{e:unigtheo-variant}}{(\ref{e:unigtheo-variant})}}

In this section, we assume $1\le p<\infty$. We will prove estimates with
constants independent of $p$
and thus the estimates will extend to the case $p=\infty$.

We first prove estimates for cubes that are far away from $T$.
\begin{lemma}
\label{l:offthetree1}
Let  $i \le 0$ and $I\in \D_i$ satisfy $I\not \subset 7U$.
Then
\begin{equation}\label{e:farI}   
2^{-id/p} \no{\rho_I^{-3\alpha} \psi *g}_p\le CS 2^{i\alpha}\no{1_U \rho_I^{-\alpha}}_\infty.
\end{equation}
\end{lemma}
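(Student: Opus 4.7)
My plan rests on Lemma~\ref{l:commutechii} combined with the support property of $g$. The construction in Section~2 ensures $g$ is supported in $5U$, and the remark after Lemma~\ref{l:commutechii} guarantees that any $\psi\in\Psi_{i-m}^{4\alpha}\subset\Phi_{i-m}^{4\alpha}$ satisfies that lemma's hypothesis with $k=0$. I therefore apply the lemma with $F=5U$ to obtain
\[
\|\rho_I^{-3\alpha}\psi*g\|_p \le C\,\rho_I(5U)^{-2\alpha}\|\rho_I^{-\alpha}g\|_p
\le CS\,\rho_I(5U)^{-3\alpha},
\]
where the second inequality uses $g=1_{5U}g$, the identity $\|1_{5U}\rho_I^{-\alpha}\|_\infty=\rho_I(5U)^{-\alpha}$, and the global bound $\|g\|_p\le CS$ supplied by \eqref{e:unifgnorm} applied with $i_0=0$.

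It then remains to prove the geometric inequality $2^{-id/p}\rho_I(5U)^{-3\alpha}\le C\,2^{i\alpha}\rho_I(U)^{-\alpha}$. The hypothesis $I\not\subset 7U$ together with $i\le 0$ forces $I\cap 5U=\emptyset$: the sets $5U$ and $(7U)^c$ sit at sup-norm distance exactly $1$ from each other, while the sup-norm side length of $I\in\D_i$ with $i\le 0$ is $2^i\le 1$, so no such cube can intersect both sets. Consequently $\rho_I(5U)\ge 3/2$, and the nesting $U\subset 5U$ with a two-unit buffer gives $\dist_\infty(c_I,U)\ge\dist_\infty(c_I,5U)+2$. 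Using the explicit formula $\rho_I(y)=\max(1,\tfrac12+\dist_\infty(y,c_I)\cdot 2^{1-i})$, this translates into a quantitative relation between $\rho_I(U)$, $\rho_I(5U)$, and $2^{-i}$, from which a case split on whether $\rho_I(5U)\gtrsim 2^{-i}$ or not, combined with $\alpha>d\ge d/p$, completes the comparison.

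I anticipate the main obstacle will be the near regime in which $I$ lies just outside $5U$ and $i$ is very negative: $\rho_I(5U)$ is then comparable to the constant $3/2$, providing no decay, while $2^{-id/p}$ grows unboundedly and the needed smallness must come entirely from $\rho_I(U)^{-\alpha}$ together with the explicit factor $2^{i\alpha}$ on the right. If the direct application of Lemma~\ref{l:commutechii} turns out to be insufficient in this regime, the likely remedy is either a finer decomposition $g=h+\sum_n k_n$ (as in Section~3) using the tighter support of $h$ in a neighborhood of $3U$, or exploiting the smoothness of $g$ supplied by Lemma~\ref{l:contderivative} together with the vanishing of all moments of $\psi$ (which follows from $\hat\psi$ vanishing in a neighborhood of the origin) via an integration-by-parts argument.
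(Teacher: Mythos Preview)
Your overall strategy matches the paper's exactly: apply Lemma~\ref{l:commutechii} with $F=5U$, use $\supp g\subset 5U$, and invoke the already-proven bound \eqref{e:unifgnorm}. The paper even keeps $\|1_{5U}\rho_I^{-\alpha}\|_\infty$ rather than bounding it by $\rho_I(5U)^{-\alpha}$ as you do, and then observes $\|1_{5U}\rho_I^{-\alpha}\|_\infty\le C\|1_U\rho_I^{-\alpha}\|_\infty$; your version is an equivalent repackaging.

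The gap is in the geometric step. You deduce only $I\cap 5U=\varnothing$ and the weak bound $\rho_I(5U)\ge 3/2$, and then worry about a ``near regime'' where $\rho_I(5U)$ stays bounded while $2^{-id/p}$ blows up. That regime is empty. The hypothesis $I\not\subset 7U$ (not merely $I\cap 5U=\varnothing$) forces the center $c_I$ to lie at sup-distance at least $1-2^{i-1}$ from $5U$: if some point of $I$ lies outside $7U$, that point is at distance $\ge 1$ from $5U$, and $c_I$ is within $2^{i-1}$ of it. Plugging into your own formula $\rho_I(y)=\max\bigl(1,\tfrac12+2^{-i}\|y-c_I\|_\infty\bigr)$ (note the exponent is $-i$, not $1-i$) gives for every $y\in 5U$
\[
\rho_I(y)\ge \tfrac12+2^{-i}(1-2^{i-1})=2^{-i},
\]
so $\rho_I(5U)\ge 2^{-i}$ always. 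This single observation gives $\rho_I(5U)^{-2\alpha}\le 2^{2i\alpha}\le 2^{i\alpha}2^{id/p}$ (using $\alpha>d/p$ and $i\le 0$), and together with $\rho_I(U)\le C\rho_I(5U)$ (which follows because $U\subset 5U$ and $\diam_\infty(5U)\le 5\le 5\cdot 2^{-i}\le 5\rho_I(5U)$) your inequality $2^{-id/p}\rho_I(5U)^{-3\alpha}\le C\,2^{i\alpha}\rho_I(U)^{-\alpha}$ follows with no case split. The fallback remedies you outline (decomposing $g=h+\sum_n k_n$, integration by parts against moments of $\psi$) are unnecessary.
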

\begin{proof}
As $g = 1_{5U}g$,
we can apply Lemma \ref{l:commutechii} 
to obtain
\[
\no{\rho_I^{-3\alpha} \psi *g}_p 
\le C \rho_I(5U)^{-2\alpha} \no{g \rho_I^{-\alpha}}_p.
\]
We use the $L^p$ bound \eqref{e:unifgnorm} on $g$, which we have already established,
and that the support of $g$ is contained in $5U$, to conclude
\[\no{g \rho_I^{-\alpha}}_p \le \no{g}_p\|1_{5U}\rho_I^{-\alpha}\|_\infty\le CS\|1_U\rho_I^{-\alpha}\|_\infty.\]
In the last inequality, we 
replaced $5U$ by $U$, thanks to the assumptions on size and location of $I$.
As $i \le 0$ and $I \not \subset 5U$,
inequality \eqref{e:farI} then follows from
$\rho_I(5U)^{-2\alpha}\le
2^{2i\alpha}\le 2^{i\alpha}2^{id/p}$.
\end{proof}

Let $\mathcal{P}$ be the collection of maximal dyadic cubes $K$
such that $K \subset 7U$ but $3K$ does not contain any cube from $T$. 
By maximality, the cubes in $\mathcal{P}$ are pairwise disjoint. They also have side-length at most $1$.

\begin{lemma}
\label{l:inK}
Let $i \le k \le 0$, $I \in \D_i$ with $I  \subset 7U$
and $\psi\in \Psi_{i-m}^{4\alpha}$.
If there exists $K\in \mathcal{P} \cap \D_k$ such that $I \subset K$,
then 
\begin{equation}\label{e:kest}  
2^{-id/p} \no{\rho_I^{-3\alpha} \psi *g}_p\le CS \left( \rho_I (( {\textstyle \frac{3}{2} } K)^{c})^{-2\alpha}+2^{i-k}\right).
\end{equation}
\end{lemma}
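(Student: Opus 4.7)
The plan is to split $g = g\mathbf{1}_{\frac{3}{2}K} + g\mathbf{1}_{(\frac{3}{2}K)^c}$ and estimate the two resulting convolutions with $\psi$ separately. For the outer contribution $\psi * (g\mathbf{1}_{(\frac{3}{2}K)^c})$, I would apply Lemma \ref{l:commutechii} with $F = (\frac{3}{2}K)^c$ and $\phi = \psi$ to bound it by $C\rho_I((\tfrac{3}{2}K)^c)^{-2\alpha}\no{\rho_I^{-\alpha}g}_p$. To estimate the weighted $L^p$ norm of $g$, I would combine the pointwise bound $\|g\|_\infty \le CS$ (inherited from the $p = \infty$ case of \eqref{e:unifgnorm}, valid since the construction of $g$ is $p$-independent and the proofs of Lemmas \ref{l:hpart} and \ref{l:knpart} go through uniformly in $p$) with the elementary estimate $\no{\rho_I^{-\alpha}\mathbf{1}_{5U}}_p \le C 2^{id/p}$ (using $\alpha p > d$ and $I \subset 5U$). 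After dividing by $2^{id/p}$, this contributes the term $CS\rho_I((\tfrac{3}{2}K)^c)^{-2\alpha}$ of the target bound.

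For the inner contribution $\psi * (g\mathbf{1}_{\frac{3}{2}K})$, I would exploit the defining condition of $\mathcal{P}$, namely that $3K$ contains no $T$-cube, to argue that $E_j^1\cap \frac{3}{2}K = \emptyset$ for all $j \le k-1$ and $E_j^2\cap \frac{3}{2}K = \emptyset$ for all $j \le k-2$. Indeed, any $T$-cube at such a scale $j$ must lie outside $3K$, and its $\rho$-neighborhood of radius $2$ extends at most $2\cdot 2^j \le 2^{k-1}$ into $3K$, which is strictly less than the width $\frac{3}{4}\cdot 2^k$ of the buffer between $\partial(3K)$ and $\partial(\frac{3}{2}K)$. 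Consequently the sum \eqref{e:gnj_deco} collapses on $\frac{3}{2}K$ to the $(\tau_{-m}*f)\chi$ piece plus contributions from the finitely many scales $j \in \{k-1, \ldots, 0\}$.

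For each surviving scale-$j$ piece $g_{n,j} = \partial_n^{d+1}G_{n,j}$, I would exploit the vanishing moment $\int \psi = 0$ (a consequence of $\widehat{\psi}(0) = 0$) together with the gradient estimate $\|\nabla g_{n,j}\|_\infty \le CS 2^{m-j}$ — which follows from \eqref{e:thetaphi}, Lemma \ref{l:sigma-only} applied to $|l|\le d+2$, and the smoothness of $G_{n,j}$ from Lemma \ref{l:contderivative} — to obtain the pointwise bound $|\psi * g_{n,j}(x)| \le CS 2^{m-j}\int|\psi(z)||z|\, dz \le CS 2^{i-j}$ at points $x$ whose $2^{i-m}$-neighborhood lies in $\frac{3}{2}K$. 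Similarly, the $(\tau_{-m}*f)\chi$ piece contributes at most $CS 2^i \le CS 2^{i-k}$ (using $k \le 0$). The geometric sum $\sum_{j=k-1}^0 2^{i-j} \le C 2^{i-k}$ yields the pointwise bound $|\psi * (g\mathbf{1}_{\frac{3}{2}K})(x)| \le CS 2^{i-k}$ in the bulk of $\frac{3}{2}K$, which integrates against the weight $\rho_I^{-3\alpha p}$ (of $L^p$-norm comparable to $2^{id/p}$) to give the second term of the target bound.

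The main technical obstacle I expect is the treatment of points $x$ near $\partial(\frac{3}{2}K)$, where the cutoff $\mathbf{1}_{\frac{3}{2}K}$ prevents the direct Taylor argument. In that regime, one should localize the boundary contributions via another application of Lemma \ref{l:commutechii} and absorb them into the $\rho_I((\frac{3}{2}K)^c)^{-2\alpha}$ term rather than $2^{i-k}$. Moving derivatives between $\psi$ and $G_{n,j}$ via integration by parts $\psi * \partial_n^{d+1}G_{n,j} = (\partial_n^{d+1}\psi)*G_{n,j}$, which is legitimate by the smoothness from Lemma \ref{l:contderivative}, is essential for avoiding differentiation across the jump discontinuities of the sharp cutoffs $\mathbf{1}_{E_j^1}$.
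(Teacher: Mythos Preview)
Your plan has a genuine gap: the pointwise bounds you invoke are in terms of $S_\infty$, not the $S=S_p$ appearing in the lemma. The quantity $S$ in \eqref{e:defs} depends on $p$ through the $L^p$ norm, so the $p=\infty$ case of \eqref{e:unifgnorm} yields $\|g\|_\infty\le CS_\infty$, and likewise your gradient bound $\|\nabla g_{n,j}\|_\infty\le C 2^{m-j}S_\infty$ (from Lemma~\ref{l:sigma-only} with $p=\infty$ and from \eqref{e:thetaphi}). The fact that the construction of $g$ is $p$-independent does not let you replace $S_\infty$ by $S_p$; Proposition~\ref{t:spq} shows one only has $S_\infty\le C2^{dm}S_1$, which destroys uniformity in $m$. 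Nor can you rescue the outer piece by using $\|g\|_p\le CS_p$: this gives $\|\rho_I^{-\alpha}g\|_p\le CS_p$, whereas you need $\le CS_p\,2^{id/p}$, and when $k$ is close to $i$ (so that $\rho_I((\tfrac32 K)^c)\sim 1$) the missing factor $2^{id/p}$ is genuinely small.

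The paper avoids this by decomposing in \emph{scale} rather than in space: writing $g=g_s+g_l$ with $g_s$ the sum over $j<k$ and $g_l$ the remainder. The small-scale piece $g_s$ is supported outside $\tfrac32 K$, so Lemma~\ref{l:commutechii} produces the factor $\rho_I((\tfrac32 K)^c)^{-2\alpha}$, and then the bound $\|\rho_I^{-\alpha}g_s\|_p\le CS_p\,2^{id/p}$ is obtained by revisiting the proof of \eqref{e:unifgnorm} with the weight inserted, using that every cube in $\mathcal I_{<k-1}$ and every $B_j^1$-cube with $j<k$ satisfies $\rho_I(\cdot)\gtrsim\rho_I(J)$ on $J$ (your shortcut via $\|g\|_\infty$ bypasses exactly this computation). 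For the large-scale piece $g_l$, the paper stays in $L^p$ throughout: it cone-decomposes $\psi=\sum_\nu\psi_\nu$ and integrates by parts $(d{+}1)$ times in the $\nu$-direction to write $\psi_\nu*g_{n,j}=(\partial_\nu^{-(d+1)}\psi_\nu)*(\partial_\nu^{d+1}g_{n,j})$, then applies Lemma~\ref{l:commutechii} and the definition of $S_p$ at scale $j$ (via a cube $\hat I\in\D_j$ near $T$, available because $9K$ meets $T$ by maximality). This yields a gain $2^{(d+1)(i-j)}$ against an $L^p$ norm at scale $2^{jd/p}$, summing to $CS_p\,2^{id/p}2^{i-k}$. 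Your final paragraph does gesture at this integration by parts, but as a side remark rather than the main mechanism; in fact it is the only route that keeps the estimate in terms of $S_p$.
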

\begin{proof}
We write $g$ as sum of small scales and large scales, 
\begin{align}
    \label{e:gs}
g_s &=
\sum_{j=-\infty}^{k-1}\left((\psi_{j-m}*f) 1_{E_j^{1}}+
\sum_{n=1}^{d}   \partial_n^{d+1}\sigma_{n,j}\right), \\
    \label{e:gl}
g_l &= (\tau_{-m}*f)\chi + \sum_{j=k}^{0}\left((\psi_{j-m}*f) 1_{E_j^{1}}+ \sum_{n=1}^{d}  \partial_n^{d+1}\sigma_{n,j}\right).
\end{align} 
By the triangle inequality, 
it suffices to prove analogs of \eqref{e:kest} for the summands separately.

\subsection*{Small scales}
Let $F$ be the union of dyadic cubes $B \in \mathcal{D}_{k-2}$ with $\rho_B(E_{k-1}^{1}) \le 1$. 
The term corresponding to the small scales satisfies $g_s = 1_{F}g_s$, 
and we have by Lemma \ref{l:commutechii} 
\[
\no{\rho_I^{-3\alpha} (\psi * g_s) }_p
  \le C \rho_I(F) ^{-2\alpha} \no{ \rho_I^{-\alpha} g_s }_p .
\]
As the interior of $K$ is disjoint from $E_{k}^{1}$,
we conclude the interiors of ${\textstyle \frac{3}{2}} K$ and $F$ are disjoint
so that 
\[
\rho_I(F) ^{-2\alpha} \le  \rho_I( ({\textstyle \frac{3}{2} }K)^{c} ) ^{-2\alpha} .
\] 
Our estimate of $g_s$ will be complete once we show
\[
\no{ \rho_I^{-\alpha}g_s }_p 
\le 2^{id/p} S .
\]
For $h$ and $k_n$ as defined in 
\eqref{def:hpart}
and \eqref{def:knpart}, 
it holds
\begin{equation}\label{e:gsdetail}
g_s=-1_{E_{k-1}^{1}}(\tau_{k-1-m}*f)
+h+ \sum_{n=1}^d k_n .
\end{equation}
We estimate the three summands separately.
 
We start by noticing that for any $J \in \mathcal{D}_j$ with $j \le k-1$ and $J \subset E_{j}^{1}$
it holds $\rho_J(I) \ge 2$.
Suppose that such a cube $J$ is given.
If $j > i$,
then
\begin{equation*}
\rho_I(J) \ge C 2^{j-i} \rho_J(I) \ge C \sup_{y \in J} \rho_I(y)
\end{equation*} 
and for $j \le i$ 
\begin{equation}
\label{e:gap1}
\rho_I(J) \ge C \sup_{y \in J} \rho_I(y)
\end{equation}
is immediate.
We conclude that for all cubes $J$ as above 
the inequality \eqref{e:gap1} holds.

\subsection*{First term in \texorpdfstring{\eqref{e:gsdetail}}{(\ref{e:gsdetail})}}
For each cube $J\in \D_{k-1}$ with $J\subset E_{k-1}^{1}$, 
we note that 
\begin{equation}
\label{2510_first}
\|\rho_I^{-\alpha}1_J (\tau_{k-1-m}*f)\|_p^p\le
\rho_I(J)^{-\alpha p} \|1_J (\tau_{k-1-m}*f)\|^p_p\le C^{p} S^p \rho_I(J)^{-\alpha p}|J|
.\end{equation}
In the last inequality, we have estimated the $L^p$
norm using the definition of $S$ 
and that there is an cube near $J$ in $T_{k-1}^0$.
As $\rho_J(I) \ge 2$,  
we can invoke \eqref{e:gap1}.
Summing over the disjoint cubes $J \in \D_{k-1}$ with $J \subset E_{k-1}^{1}$
and using \eqref{2510_first},
we hence obtain
\[\|\rho_I^{-\alpha}1_{E_{k-1}^{1}} (\tau_{k-1-m}*f)\|_p^p \le
 C^{p} S^p  \int \rho_I(y)^{-\alpha p} \, dy
\le C^{p} S^p |I|.\]
This gives the desired bound for the first term in \eqref{e:gsdetail}.

\subsection*{Second term in \texorpdfstring{\eqref{e:gsdetail}}{(\ref{e:gsdetail})}}
We turn to the second term in \eqref{e:gsdetail}.
We use the decomposition given in Lemma \ref{l:Iispartition} 
\[
1_{E_{k-1}^{1}} = \sum_{J \in \mathcal{I}_{<k-1}} 1_{J}.
\]
Fix a cube $J\in \mathcal{I}_{j}$
with $j< k-1$. 
By Lemma \ref{l:hpart}
\[
\no{ \rho_I^{-\alpha} 1_{J} h }_p^{p}
\le C^{p} \rho_I(J)^{-\alpha p} S^{p} |J|.
\]
Using now the fact $\rho_J(I) \ge 2$ 
and its consequence \eqref{e:gap1}, 
we estimate 
\[
\sum_{J \in \mathcal{I}_{<k-1}} C^{p} \rho_I(J)^{-\alpha p} S^{p} |J|
  \le C^{p} S^{p} \int \rho_I(x)^{-\alpha p} \, dx \le C S^{p} |I|.
\]
This proves the desired estimate for $h$.

\subsection*{Third term in \texorpdfstring{\eqref{e:gsdetail}}{(\ref{e:gsdetail})}}
To bound the term $k_n$ in \eqref{e:gsdetail},
we apply \eqref{e:sigmabound} in Lemma \ref{l:sigma-only} and estimate 
\begin{multline}
\label{e:05092}
 \sum_{n=1}^{d} \no{ \rho_I^{-\alpha} k_n }_p^{p}
\le C^{p} \sum_{j=-\infty}^{k-1} \sum_{n=1}^{d} \sum_{J \in B_{j}^{1}}
\rho_I(\partial E_{j}^{1} \cap \partial J)^{-\alpha p} \no{1_{J} \partial_n^{d+1}\sigma_{n,j}}_p^{p}\\
\le C^{p} \sum_{j=-\infty}^{k-1} \sum_{J \in B_{j}^{1}}
\rho_I(\partial E_{j}^{1} \cap \partial J)^{-\alpha p} 2^{jd} S ^{p}
\end{multline}
For each $J \in B_j^{1}$ there exists $\tilde{J} \subset E_j^{1}$ with
\[
\rho_I(\partial E_{j}^{1} \cap \partial J) \ge C \rho_I(\partial E_{j}^{1} \cap \partial J \cap \partial \tilde{J}).
\]
We can then use $\rho_{\tilde{J}}(I) \ge 2$ 
to apply \eqref{e:gap1} to $\tilde{J}$ and to estimate  
\[
\rho_I(\partial E_{j}^{1} \cap \partial J) \ge C \sup_{y \in \tilde{J}} \rho_{I}(y) \ge C \sup_{y \in 4^{-1} J} \rho_{I}(y).
\]
This together with disjointness of $J \in B_{j}^{1}$ for all $j$ implies
\[
C^{p} \sum_{j=-\infty}^{k-1} \sum_{J \in B_{j}^{1}}
\rho_I(\partial E_{j}^{1} \cap \partial J)^{-\alpha p} 2^{jd}
 \le  C^{p} \int \rho_I(x)^{-\alpha p} \, dx
 \le C^{p} |I| 
\]
so that the right hand side of \eqref{e:05092} is bounded by $C^{p} |I| S^{p}$ as claimed. 
This concludes the proof of the bound for the small scales \eqref{e:gs}. 
 
\subsection*{Large scales}
We turn to estimate \eqref{e:gl}.
Consider first the sum 
\begin{equation}
\label{e:second_ibp}
\sum_{n=1}^{d} \sum_{j=k}^{0} \left((\psi_{n,j-m}*f) 1_{E_j^{1}}+   \partial_n^{d+1}\sigma_{n,j}\right).
\end{equation}
Following section \ref{sec:cone-decomposition},
we form the cone decomposition  
\[
\psi = \sum_{\nu = 1}^{d} \psi_\nu , 
\]
where the Fourier transform of $\psi_\nu$ vanishes whenever $2d |\xi_\nu|^{2} \le |\xi|^{2}$.
We notice  
\[
2^{-(d+1)(i+m)} \partial_\nu^{-d-1}  \psi_{\nu} \in C \Phi_{i-m}^{4\alpha}, \quad 2^{(d+1)(j-m)} \partial_\nu^{d+1} \psi_{n,j-m} \in C \Psi_{j-m}^{4\alpha}
\] 
where $\partial_{\nu}^{-d-1}$ is used to denote the Fourier multiplier operator with symbol $(2\pi i \xi_{\nu})^{-d-1}$.

The summands in \eqref{e:second_ibp} are $d+1$ times differentiable by Lemma \ref{l:contderivative},
and integrating by parts we obtain 
\begin{multline}
\label{e:two-sums-in-K-prop}
\nos{\rho_I^{-3\alpha} \psi_{\nu} * \left((\psi_{n,j-m}*f) 1_{E_j^{1}}+  \partial_n^{d+1}\sigma_{n,j}\right) }_p \\
= \nos{\rho_I^{-3\alpha} \partial_\nu^{-d-1}\psi_\nu * \left(( \partial_\nu^{d+1} \psi_{n,j-m}*f) 1_{E_j^{1}}+ \partial_{\nu}^{d+1} \partial_n^{d+1}\sigma_{n,j}\right) }_p \\
\le \nos{\rho_I^{-3\alpha}  \partial_\nu^{-d-1}\psi_\nu * (( \partial_\nu^{d+1} \psi_{n,j-m}*f) 1_{E_j^{1}})}_p \\
+   \nos{\rho_I^{-3\alpha}  \partial_\nu^{-d-1}\psi_\nu * \partial_{\nu}^{d+1}\partial_n^{d+1}\sigma_{n,j}}_p.
\end{multline}
By Lemma \ref{l:commutechii},
the first term is bounded by 
\[
C 2^{(d+1)(i-j)} \nos{\rho_I^{-\alpha} ( 2^{(d+1)(j-m)} \partial_\nu^{d+1}\psi_{n,j-m}*f)  }_p.
\]
Because $j \ge k$,
we can find $\widehat{I}$ with $\widehat{I} \in \mathcal{D}_j$ and $\widehat{I} \supset K \supset I$. 
We know by maximality that $9K$ contains a cube from $T$,
and hence $\rho_{\widehat{I}}(E_{j}^{0}) \le 2$.
Because $\rho_I^{-\alpha} \le \rho_{\widehat{I}}^{-\alpha}$,
we conclude that 
\begin{multline*}
\sum_{j=k}^{0}  2^{(d+1)(i-j)} C \nos{\rho_I^{-\alpha} ( 2^{(d+1)(j-m)} \partial_\nu^{d+1}\psi_{n,j-m}*f)  }_p \\
\le C 2^{(i-k)((1-1/p)d+1)+ id/p} S 
\end{multline*}
which is the claimed upper bound for the first term on the right hand side of \eqref{e:two-sums-in-K-prop}.

To estimate the second term on the right hand side of \eqref{e:two-sums-in-K-prop},
we note that by Lemma \ref{l:commutechii}
\begin{multline}
\label{e:05091}
\sum_{j=k}^{0} \nos{\rho_I^{-3\alpha}  \partial_\nu^{-d-1}\psi_n *\partial_\nu^{d+1}\partial_n^{d+1}\sigma_{n,j}}_p
\le \\
C \sum_{j=k}^{0} 2^{(d+1)(i-m)} \left(  \sum_{J \in B_{j}^{1}} 
\rho_{I}( J )^{-2\alpha p }   \no{1_{J} \partial_{\nu}^{d+1} \partial_n^{d+1}\sigma_{n,j}}_p^{p} \right)^{1/p}.
\end{multline}
Let $\hat{I} \in \mathcal{D}_j$ be the dyadic cube with $I \subset \hat{I}$.
Then $\rho_I(J) \ge \rho_{\hat{I}}(J)$.
By Lemma \ref{l:sigma-only} 
\[
\no{1_{J} \partial_{\nu}^{d+1} \partial_n^{d+1}\sigma_{n,j}}_p \le C 2^{(m-j)(d+1)} 2^{jd/p} S,
\]
and consequently we estimate the right hand side of \eqref{e:05091} by 
\begin{multline*}
C S \sum_{j=k}^{0} 2^{(d+1)(i-j)}   \no{\rho_{\hat{I}}^{-2\alpha }}_{p} 
\le C S \sum_{j=k}^{0} 2^{(d+1)(i-j)+jd/p} \\
\le C 2^{(d(1-1/p)+1)(i-k)} S 2^{id/p}
\end{multline*}
which is the desired upper bound. 

\subsection*{First term in \texorpdfstring{\eqref{e:gl}}{(\ref{e:gl})}}
The remaining term in \eqref{e:gl}
\[
\no{\rho_I^{-3\alpha} \psi  * ( (\tau_{-m}*f)\chi) }_p
\]
is estimated identically to the first term in \eqref{e:two-sums-in-K-prop}
as $\chi$ is smooth.
As we have given estimates for the contributions from each of the terms in \eqref{e:gs} and \eqref{e:gl},
the proof of the lemma is complete.
\end{proof}

\begin{lemma}
\label{l:remainder-meets-thetree}
Let $i \le 0$ and $I \in \mathcal{D}_i$. 
If $I \subset 7U$ and $I \not \subset K$ for all $K \in \mathcal{P}$, 
then there is $J \in  T_{i}$ with $\rho_I(J) \le 1$. 
\end{lemma}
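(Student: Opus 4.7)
The plan is to exploit the maximality built into the definition of $\mathcal{P}$ in order to produce a $T$-cube inside $3I$, and then to descend from this cube to a suitable level-$i$ element of $T$ close to $I$.

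First, I would prove the intermediate claim that $3I$ must contain at least one cube from $T$. Suppose otherwise. Since $I \subset 7U$, the cube $I$ itself satisfies the defining property of a $\mathcal{P}$-candidate (namely, $I \subset 7U$ with $3I$ containing no $T$-cube). Because $T$ is finite and every cube of $T$ lies in $U$, the triples of sufficiently large dyadic ancestors of $I$ will contain $U$ and hence some $T$-cube. Consequently there is a largest dyadic ancestor $I^{(n)}$ with $I^{(n)} \subset 7U$ and $3I^{(n)}$ containing no $T$-cube; by the maximality condition this ancestor lies in $\mathcal{P}$, and $I \subset I^{(n)}$ contradicts the hypothesis. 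Hence $3I$ contains a cube from $T$.

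For the descent, I would pick $J_0 \in T$ with $J_0 \subset 3I$ of smallest side length. Because $3I$ has side $3\cdot 2^i$, the cube $J_0$ lies in $\D_j$ for some $j \le i+1$. If $j \le i$, take $J$ to be the unique level-$i$ dyadic ancestor of $J_0$. Then $J \subset U$ inherits from $J_0 \subset U$ (since $|J| \le |U|$), and $J \supset J_0 \supset I'$ for some $I' \in M$, so $J \in T_i$. As a level-$i$ dyadic cube meeting $3I$, $J$ must be one of the $3^d$ cubes tiling $3I$, i.e.\ $I$ itself or a dyadic neighbor of $I$; neighboring dyadic cubes share boundary points, so $\rho_I(J) \le 1$. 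If $j = i+1$, dyadic alignment forces $J_0$ to coincide with the parent $I^{(1)}$ of $I$ (the only level-$(i+1)$ dyadic cube contained in $3I$); picking $I' \in M$ with $I' \subset J_0$ and assuming $I' \subsetneq J_0$, I take $J$ to be the level-$i$ ancestor of $I'$ and argue as above.

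The main obstacle is the borderline subcase $I' = J_0 \in M$, in which $I$ sits as a proper dyadic sub-cube of an element of $M$ and, since $M$ is pairwise disjoint, no level-$i$ sub-cube of $J_0$ belongs to $T$. I expect to handle this case by observing that $J_0 \in T$ itself satisfies $\rho_I(J_0) = 1$ because $I \subset J_0$, so the desired witness lies only one level above $i$; alternatively, one exploits that such $I$ correspond to cubes strictly below the stopping time, a configuration which in the application of the lemma to the proof of \eqref{e:unigtheo-variant} is typically incompatible with the standing hypothesis that no $J' \in \D_j$ with $\rho_J(J') \le 1$ contains an element of $M_U$, and therefore can be treated separately from the generic case covered by the main argument above.
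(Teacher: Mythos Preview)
Your argument is essentially the paper's argument, only more verbose. The paper picks a \emph{maximal} $T$-cube $J\subset 3I$ rather than a minimal one; this is cleaner, because maximality forces $J$ either to be the dyadic parent of $I$ (giving $J\in T_{i+1}$ and $I\subset J$) or to lie in $\D_i$ directly (giving $J\in T_i$ with $\rho_I(J)\le 1$), with no need to pass to ancestors. With your choice of the \emph{smallest} $J_0$, the $j\le i$ branch needs the ancestor step you wrote, and the $j=i+1$ branch degenerates: if $J_0$ is smallest and lies in $\D_{i+1}$, then $J_0$ is the unique $T$-cube in $3I$, hence $J_0\in M$ automatically, and your sub-case ``$I'\subsetneq J_0$'' is vacuous. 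So the only content of your $j=i+1$ branch is the borderline case.

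On that borderline case you are exactly right, and it is worth noting that the paper does not resolve it either: its proof terminates with the disjunction ``either $J\in T_{i+1}$ and $I\subset J$, or $J\in T_i$ and $\rho_I(J)\le 1$''. The first alternative genuinely cannot always be upgraded to a witness in $T_i$ (take $d=1$, $U=[0,1)$, $M=\{[0,\tfrac12)\}$, $I=[0,\tfrac14)$; then $T_{-2}=\varnothing$). So the statement as printed is slightly stronger than what is actually established; what is used downstream is the disjunction, which both you and the paper prove. Your second suggested workaround---that in the application to \eqref{e:unigtheo-variant} the hypothesis on $J$ rules out this configuration---is the right instinct for how the lemma is consumed.
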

\begin{proof} 
By definition, $3I$ contains an element $J$ from $T$.
Assuming $J$ is the maximal element contained in $3I$,
we see that either $J \in T_{i+1}$ and $I \subset J$ or $J \in T_{i}$ and $\rho_I(J) \le 1$.
\end{proof}

Now we can complete the proof of inequality \eqref{e:unigtheo-variant}.
If $J \cap 7U = \varnothing $,
we use Lemma \ref{l:offthetree1} to estimate the left hand side of \eqref{e:unigtheo-variant} by
\begin{equation*}
C S \sum_{i \le 0} \sup_{\substack{I \in \mathcal{D}_i \\ I \cap 7U = \varnothing \\ I \subset J}}  |I|^{\alpha /d} \no{1_U \rho_I^{-\alpha}}_\infty 
  \le C S \no{1_U \rho_J^{-\alpha}}_\infty,
\end{equation*}
which is the desired right hand side.

If $J \cap 7U \ne \varnothing$, we find a $K\in \mathcal{P}$ such that $J\subset K$ and 
we use Lemma \ref{l:inK} to estimate the left hand side of \eqref{e:unigtheo-variant} by 
\begin{equation}
\label{e:off-tree-sums}
  CS  \sum_{i\le i_0}\sup_{\substack{I \in \mathcal{D}_i \setminus T \\ I \subset J }}     \rho_I(( {\textstyle \frac{3}{2}} K)^{c})^{-2\alpha} +  (|I|/|K|) ^{1/d}    \le CS.
\end{equation}

This concludes the proof.

\section{Proof of Proposition \ref{t:spq}}
Assume first $1\le p\le q\le \infty$.  
By H\"older's inequality,
we have
\[\|\rho_I^{-\alpha(1+\frac{q-p}{qp})}
\phi*f\|_p\le \|\rho_I^{-\alpha\frac {q-p}{qp}}\|_{\frac{qp}{q-p}}
\|\rho_I^{-\alpha}
\phi*f\|_q,\]
and thus for some universal constant $C$ 
\[ S_{\alpha(1+\frac {q-p}{qp}),p}\le CS_{\alpha,q},\] 
which proves \eqref{e:holder}.
 
Assume now $1\le q\le p\le \infty$.
We  use logarithmic convexity
\[S_{\alpha,p}\le S_{\alpha,q}^{q/p}S_{\alpha,\infty} ^{1-q/p}.\]
To bound $S_{\alpha,\infty}$, we proceed similarly as in Lemma \ref{l:commutechii}
and note that for $i\le i_0$, $I\in D_i\cap M_U$, and
$\phi\in \Phi^{4\alpha}_{i-m-2}$,
\[ \|\rho_I^{-\alpha}(\phi*f)\|_\infty\le 
\|\rho_I^{-\alpha}(\phi*\rho_U^{\alpha})\|_\infty \|\rho_U^{-\alpha} f\|_\infty\]
and
\[\|\rho_I^{-\alpha}(\phi*\rho_U^{\alpha})\|_\infty\le 
C \|\rho_I^{-\alpha} \rho_U^{\alpha}\|_\infty\le C.\]
This proves  \eqref{e:logconvex}.
 
Finally,
if one is willing to lose uniformity in the parameter $m$, 
one can use a local Bernstein's inequality.
Let $\varphi$ be a Schwartz function on $\R^d$ 
so that $\widehat{\varphi}(\xi)=0$ for $|\xi| > 1$
and $\varphi(x) \ge c_d > 0$ for $x \in U$ for a dimensional constant $c_d$.
Denote
$\varphi_J(x)=\varphi(2^{-j} (x-c(J)))$
for $j\in \Z$ and $J\in \D_j$ and $c(J)$ the center of $J$.
Then for $I\in \D_i$ and $\phi \in \Phi_{i-m-2}^{4\alpha}$
\begin{multline*}\|\rho_{I}^{-\alpha} \phi*f\|_\infty
 \le 
  C \sum_{J\in \D_i} \rho_I(J)^{-\alpha} \| \varphi_J \phi*f\|_\infty\\ 
\le C 2^{d(m-i)}\sum_{J\in \D_i} \rho_I(J)^{-\alpha} 
 \| \varphi_J \phi*f\|_1
 \le C 2^{d(m-i)} \|\rho_I^{-\alpha} \phi*f\|_1.
\end{multline*}
Here we used that the Fourier transform of $\varphi_J \phi*f $ is supported in a ball of radius $2^{3+m-i}$ and Bernstein's inequality.
This completes the proof of \eqref{e:bernstein}.
 
\bibliography{sample}

\bibliographystyle{abbrv}

\end{document}